\newtheorem{maintheorem}{Theorem}
\newtheorem{theorem}{Theorem}[section] % 1st argument is your name for it
\newtheorem{lemma}[theorem]{Lemma}     % 2nd argument is what is printed
\newtheorem{corollary}[theorem]{Corollary}
\newtheorem{proposition}[theorem]{Proposition}
\newtheorem{definition}{Definition}
\newtheorem{remark}{Remark}
\newtheorem{example}{Example}
\newcommand{\R} {\mathbb R}
\newcommand{\supp}{\operatorname{supp}}
\newcommand{\rf}[1]{(\ref{#1})}
\begin{document}

\title[Representation of Markov chains by random maps]% end with percent
 {Representation of Markov chains by random maps: existence and regularity conditions} 

\author{J\"{u}rgen Jost}
\address{J\"{u}rgen Jost\\
   Max-Planck-Institute for Mathematics in the Sciences\\
   Inselstr. 22\\
   04103 Leipzig\\
   Germany\\
and Department of Mathematics\\ University of Leipzig\\ 04081 Leipzig\\ Germany}
   \email{jost@mis.mpg.de}

\author{Martin Kell}
\address{Martin Kell\\
   Max-Planck-Institute for Mathematics in the Sciences\\
   Inselstr. 22\\
   04103 Leipzig\\
   Germany}
   \email{mkell@mis.mpg.de}

\author{Christian S. Rodrigues}
\address{Christian S.~Rodrigues\\
   Max-Planck-Institute for Mathematics in the Sciences\\
   Inselstr. 22\\
   04103 Leipzig\\
   Germany}
   \email{christian.rodrigues@mis.mpg.de}

\date{\today}

\begin{abstract}
  We systematically investigate the problem of representing Markov
  chains by families of random maps, and which regularity of these
  maps can be achieved depending on the properties of the probability
  measures. Our key idea is to use techniques from optimal transport
  to select optimal such maps. Optimal transport theory also tells us
  how convexity properties of the supports of the measures translate
  into regularity properties of the maps via Legendre
  transforms. Thus, from this scheme, we cannot only deduce the
  representation by measurable random maps, but we can also obtain
  conditions for the representation by continuous random
  maps. Finally, we present conditions for the representation of
  Markov chain by random diffeomorphisms.

\end{abstract}
%\commentgreen{Abstract rewritten} 

\keywords{Markov chain, random dynamics, random maps, optimal
  transport, random diffeomorphisms, optimal coupling }
\subjclass{37H10, 37C05 (secondary), 37C40, 49K45, 49N60 (primary)}

\maketitle
%%%%%%%%%%%%%%%%%%%%%%%%%%%%%%%%%

\section{Introduction}

Amongst the main concerns of Dynamics, one usually wants to decide
whether asymptotic states of a given class of systems are robust under
small random fluctuations. Such randomness, corresponding to natural
fluctuations in physical processes, are represented by either a
\textit{Markov chain model} with localised transition or by a sequence
of \textit{random maps}. To see how they arise, consider a
discrete-time system $f$ from a given topological space $M$ into
itself. Suppose at each iteration of $f$ we allow a small mistake of
size, say, at most $\varepsilon >0$. Then a Markov chain is defined by
a family $\{p_{\varepsilon}(\, \cdot \,|x)\}$ of Borel probability
measures, such that every $p_{\varepsilon}(\, \cdot \,|x)$ is
supported inside the $\varepsilon$-neighbourhood of $f(x)$. The orbit
of our dynamics subject to such small errors is thus given by
sequences of random variables $\{x_{j}\}$, where each $x_{j+1}$ has
distribution $p_{\varepsilon}(\, \cdot \, |x_{j})$. Alternatively, one
could think of the orbit as being made by the iteration $x_{j} = g_{j}
\circ \cdots \circ g_{1}(x_{0})$, where each measurable $g_{j}$ is
picked at random $\varepsilon$-close, in a sense to be made more
precise, from the original map $f$. Endowing the collection of maps
$\{g_{j}\}$ with a probability distribution $\nu_{\varepsilon}$, we
say that the sequence of random maps is a \textit{representation of
  that Markov chain} if for every Borel subset $U$
\begin{equation}
\label{eq.repr-criteria}
p_{\varepsilon}(U|x) = \nu_{\varepsilon}(\{g : g(x) \in U \}).
\end{equation}

In fact, given any sequence of random maps, one can always find a
Markov chain which is represented by this scheme~\cite[D.4]{BDV05},
see also~\cite{ZaH07}; the Markov chain is simply given by
\rf{eq.repr-criteria}, and one only needs to check that this satisfies
the Markov chain criterion. The converse problem, however, is much
more subtle, as we shall see. This is exactly the subject of this
paper. In other words, we investigate under which conditions imposed
on the Markov chain one can obtain a representation by a random map
scheme and how its regularity properties are reflected.

The study of realisations of Markov chains via random maps goes back
to Blumenthal and Corson~\cite{BlC70}. They considered the case where
$M$ is a connected and locally connected compact metric space under
some strong requirements on the probability measures. Let us denote by
$\mathcal{P}(M)$ the space of all probability measures on $M$. In
addition, suppose each $x \mapsto p(\,\cdot\,|x)$, acting from $M$ to
$\mathcal{P}(M)$, is continuous relative to the weak* topology on
$\mathcal{P}(M)$. Then, if \textit{for each $x$ the support of
  $p(\,\cdot\,|x)$ is all of $M$}, they showed that it is possible to
obtain a probability measure $\nu$ on the space of continuous
transformations of $M$ such that condition (\ref{eq.repr-criteria}) is
fulfilled. Their proof is based upon the existence of a continuous
projection of the subset of $\mathcal{P}(M)$ whose support is all of $M$
onto the space of probability measures on an interval whose support is
the whole interval itself, and its continuous inverse. The assumption
of full support on the probability measures is essential to assure the
continuity of the maps. See for example~\cite{Kif86}.

Weakening this condition, Kifer showed that if $M$ is a Borel subset
of a complete separable metric space (Polish), then any Markov chain
on $M$ can be represented by a sequence of measurable random
maps~\cite{Kif86}. His idea was to use Borel measurable isomorphisms
of $M$ to Borel subsets of the unity interval, as it had previously
been shown by Kuratowski. Later, Quas \cite{Qua91} tackled the case
where $M$ is a smooth compact orientable Riemannian manifold. He
showed that probability families which are absolutely continuous with
respect to the normalised Riemannian measure whose density is smooth
can be represented by $C^{\infty}$-random maps.

Afterwards, Ara\'{u}jo~\cite{Ara00} showed how to construct families
of $C^{r}$-diffeo\-morphisms on the $n$-torus near an unperturbed
$C^{r}$-diffeomorphism. He took advantage of the parallelisability of
this manifold and of its quotient by integers. Then he used natural
projections to identify orthonormal vector fields from which he could
build these maps $C^{r}$-close to the original one; see~{\cite[Example
  1]{Ara00}}. Using a parametrised geodesic flow, he also showed the
existence of parametrised families of diffeomorphisms, around an
unperturbed one, of any compact boundaryless manifold; see~
{\cite[Example 2]{Ara00}}. Nevertheless, his procedures focus on
\textit{uniformly continuous perturbations}, requiring the small noise
to uniformly cover a ball of positive radius around the unperturbed
diffeomorphism. Furthermore, they do not yield a family of
diffeomorphisms from a given probability distribution.

More recently, Benedicks and Viana \cite[Example 1.7]{BeV06}, and
\cite[D.4]{BDV05} constructed random maps for small non-uniform
noise. They lift implicitly the measure to the tangent space at
$f(z)$, then try to transform the distribution to a fixed measure on
$[0,1]^{n}$. If this transformation is invertible, its inverse regular
enough, and varies smoothly with respect to $z$, then it is possible
to select a random continuous map representing the perturbation. For
topological reasons these constructions may fail on manifolds with
non-trivial tangent bundle. Namely, their constructions implicitly
assume the existence of global cross-section of the frame bundle.

In fact, it is not clear how to choose the random maps representing a
Markov chain, and there might be many possible such choices. In such a
situation, a basic strategy of geometric analysis is to select the
maps according to some optimisation principle. This typically has the
advantage that an object selected by an optimisation principle is not
just some solution of the problem at hand; it typically enjoys
additional properties derived from the optimisation, and these
properties can typically be usefully exploited. This is also the
strategy we adopt in the present paper. Since the maps should relate
different measures, it is natural to select them by optimising a
transportation problem between those measures. Thus, in this paper we
introduce techniques from optimal transport in order to tackle the
representation of Markov chains under different levels of regularity
of the maps. The paper is organised as follows. After presenting the
main definitions in Section~\ref{sec.definitions}, we review the main
ingredients from optimal transport theory, in
Section~\ref{sec.optimal.transp}, to be used in the remaining part of
the paper. In Section~\ref{sec.repr.meas}, we use optimal coupling to
prove Theorem~\ref{thm.main.meas.maps}, which shows how Markov chains
can be represented by measurable maps. Then, in
Section~\ref{sec.repr.cont}, we use Moser's coupling to show how
representation by continuous random maps arise; the content of
Theorem~\ref{thm.main.cont.repres}.  In the following
Section~\ref{sec.reg.dens}, we discuss the regularity of densities and
how they affect the properties of the transport maps.  Then in the
next Section~\ref{sec.lifting}, we use tangent bundle lifts of the
measures and certain transformations to a fixed measure to construct
continuous families of probabilities on the bundles. To tackle general
manifolds, we rely on the fact that the tangent bundle of a manifold
is always contained in a (smooth) trivial bundle, which can be seen by
taking an isometric (Nash) embedding
$M\hookrightarrow\mathbb{R}^{n}$. Then we lift (in a nice way) the
measures from the tangent bundle to this trivial bundle to get a
measure family $\{\mu_{x}\}_{x\in M}$ on $\mathbb{R}^{n}$. Using
optimal transport theory and its regularity theory we get
transformations to a fixed measure varying continuously with respect
to $x\in M$ so that we can select sections of this bundle varying
continuously, such that the distribution at a point $x$ represents the
measure $\tilde{\mu}_{x}$. (Smooth) projections to the tangent bundle
and the exponential map then give us the random continuous maps. These are
summarised in Theorem~\ref{thm.main.repr.diff}.  Assuming further
regularity, these maps are differentiable and we obtain random maps
$C^{1}$-close to $f$, and if the unperturbed map $f$ is a
diffeomorphism, we thus obtain a family of random
diffeomorphisms. Therefore, we give a geometric condition for the
representation of Markov chains by random diffeomorphisms.

In a subsequent paper, the methods developed here will be applied in
order to address stochastic stability of several classes of dynamical
systems. Starting from the seminal work of Kifer \cite{Kif86, Kif88},
we shall give conditions to stability in terms of Markov chains
without the \textit{a priori} assumptions of existence of random maps.

\section{Some notation and definitions}
\label{sec.definitions}

In this section we set up the notation and collect some main
definitions to be used throughout this paper. For a comprehensive
presentation on random perturbations of dynamics, see~\cite{Kif86,
  Kif88}. Although our main interest regards problems
  where the topological spaces under consideration are differential
  manifolds, some of the results that we will present are also true
  under lower requirements. We shall state it explicitly when that is
  the case.  When we consider an arbitrary manifold though, say $M$,
we suppose it to be compact and finite dimensional, equipped with some
Riemannian structure, fixed once and for all, which induces a distance
$d : M \times M \to \mathbb{R}$. We call $m$ its normalised Riemannian
volume form on $M$, i.e. $m(M)=1$, and unless otherwise stated, we
take absolute continuity with respect to $m$. As before, let us denote
by $\mathcal{P}(M)$ the space of all Borel probability measures on
$M$. As usual, $\mathcal{P}(M)$ is endowed with the weak* topology.
The gradient operator will be denoted by $\nabla$, and the divergent
by $\nabla \cdot$; the gradient of $f$ at the point $x$ will be
denoted by $\nabla_{x}f$ or $\nabla f(x)$; the Laplace operator,
\textit{i.e.}, the divergent of the gradient will be denoted by
$\Delta$, and we use the same notation for $\mathbb{R}^{n}$ and for
Riemannian manifolds.

Regarding measurability and continuity, we recall  Lusin's theorem
to be used in our proofs.
\begin{theorem}~\cite[Theorem 2.3.5]{Fed69}.
\label{thm.Lusin}
Let $M$ be a locally compact metric space, $\mu$ a Borel measure on
$M$, and $N$ a separable metric space. Let $f : M \to N$ be a
measurable map. Consider $A \subset M$ a measurable set with finite
measure. Then for each $\delta > 0$ there is a closed set $K \subset
A$, with $\mu(A \backslash K) < \delta$ such that the restriction of
$f$ to $K$ is continuous.
\end{theorem}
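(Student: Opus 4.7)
The plan is to combine the separability of $N$ with the inner/outer regularity of the Borel measure $\mu$ on the locally compact metric space $M$. Since $N$ is separable, fix a countable base $\{V_n\}_{n\in\N}$ of open sets. Because $f$ is measurable, each preimage $E_n := f^{-1}(V_n)\cap A$ is a Borel subset of $A$, and since $\mu(A)<\infty$, the Borel measure is finite (hence $\sigma$-finite) on $A$. On a locally compact metric space this gives Radon regularity on sets of finite measure, so for every $n$ I can choose an open set $U_n\subset M$ and a closed set $C_n\subset M$ with $C_n\subset E_n\subset U_n$ and $\mu(U_n\sm C_n)<\delta/2^{n+2}$.

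Next I will build the set $K$. First, using inner regularity again, pick a closed $K_0\subset A$ with $\mu(A\sm K_0)<\delta/2$. Let $D_n := U_n\sm C_n$, which is open in $M$, and set
\[
K := K_0 \sm \bigcup_{n\in\N} D_n = K_0 \cap \bigcap_{n\in\N}(M\sm D_n).
\]
This is closed as an intersection of closed sets, and $K\subset A$. The measure estimate is
\[
\mu(A\sm K) \le \mu(A\sm K_0) + \sum_{n\in\N}\mu(D_n) < \frac{\delta}{2} + \sum_{n\in\N}\frac{\delta}{2^{n+2}} < \delta.
\]

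It remains to check that $f|_K$ is continuous. Since $\{V_n\}$ is a base for $N$, it is enough to show that $(f|_K)^{-1}(V_n)=E_n\cap K$ is relatively open in $K$ for each $n$. I claim $E_n\cap K = U_n\cap K$. The inclusion $\subset$ is immediate from $E_n\subset U_n$. For $\supset$, take $y\in U_n\cap K$; if $y\notin C_n$ then $y\in U_n\sm C_n = D_n$, contradicting $y\in K$, so $y\in C_n\subset E_n$. Hence $E_n\cap K = U_n\cap K$ is the intersection of the open set $U_n$ with $K$, i.e.\ relatively open in $K$, which yields continuity.

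The only substantive point, and the main obstacle if one wants to be careful, is the regularity statement that any finite Borel measure on a locally compact metric space can be simultaneously approximated from inside by closed sets and from outside by open sets. Once this is available (either as a standing hypothesis on $\mu$ as in Federer, or by invoking the Radon property that holds in the setting of the paper where $M$ is a Riemannian manifold), the rest of the argument is a bookkeeping exercise with the countable base of $N$.
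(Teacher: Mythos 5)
The paper offers no proof of this statement at all: it is quoted as a black box from \cite[Theorem 2.3.5]{Fed69}, so there is no internal argument to compare against. What you have written is the classical textbook proof of Lusin's theorem, and it is essentially correct: pulling back a countable base $\{V_n\}$ of $N$, trapping each $E_n=f^{-1}(V_n)\cap A$ between a closed $C_n$ and an open $U_n$, deleting the open discrepancies $D_n=U_n\setminus C_n$ from a large closed $K_0\subset A$, and then observing $E_n\cap K=U_n\cap K$ so that $f|_K$ pulls basic open sets back to relatively open sets. The identity $E_n\cap K=U_n\cap K$ is verified correctly, $K$ is closed as $K_0$ minus an open set, and the measure bookkeeping with the weights $\delta/2^{n+2}$ closes the estimate.

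The one point deserving more care is exactly the one you flag, and your resolution of it is slightly off. As written, the bound $\mu(U_n\setminus C_n)<\delta/2^{n+2}$ presupposes outer regularity of $\mu$ itself on $M$, which can fail for a general Borel measure (an open set containing $E_n$ may have infinite $\mu$-measure even though it meets $A$ in a set of small measure). The clean repair is to work with the finite Borel measure $\mu_A(B):=\mu(B\cap A)$: every \emph{finite} Borel measure on a \emph{metric} space is automatically inner regular by closed sets and outer regular by open sets, since the family of sets admitting such approximation is a $\sigma$-algebra containing all closed sets (each closed set in a metric space being a $G_\delta$). Choosing $C_n\subset E_n\subset U_n$ with $\mu_A(U_n\setminus C_n)$ small suffices, because $K_0\subset A$ gives $\mu(A\setminus K)\le\mu_A(A\setminus K_0)+\sum_n\mu_A(D_n)<\delta$. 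In particular, local compactness of $M$ plays no role in your argument --- it matters in Federer's setting only if one wants $K$ compact or works with Radon measures --- so your appeal to ``Radon regularity on a locally compact metric space'' invokes a stronger hypothesis than the proof actually needs; the regularity you use is a free metric-space fact. With that substitution your proof is complete and, unlike the paper, self-contained.
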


\subsection{Markov chains and random maps}
\label{sec.Markoc-ch}

Let $N$ be separable complete metric space. We shall consider families
of probability measures $(\mu_{x})_{x\in M}$ in $\mathcal{P}(N)$ given
by measurable maps $x\mapsto\mu_{x}$ with index set $M$. We speak of a
continuous family of probability measures if the maps $x \mapsto
\mu_{x}$ vary continuously from $M$ to $\mathcal{P}(N)$ relative to
the weak* topology. In many cases one has $N=M$ or
$N=\mathbb{R}^{n}$. Such families are sometimes called
\textit{continuous Markov kernels}.
\noindent Therefore, Markov chains are special Markov kernels obeying
some conditional probability with localised distribution.

Similarly, we can have a more general definition for our random maps.
For an auxiliary probability space $(\Omega, \mathcal{A},
\mathbb{P})$, consider a measurable collection of maps $\mathcal{F}:
\Omega \times M \to N$, $(\omega, x) \mapsto f_{\omega}(x)$. Then, we
call the family $(f_{\omega}: M \to N)_{\omega\in\Omega}$ random
measurable maps. If in addition each map in $(f_{\omega}: M \to
N)_{\omega\in\Omega}$ is continuous, or a diffeomorphism, then we say
that it is a family of \textit{random continuous maps}, or
\textit{random diffeomorphisms}, respectively. They are also known as
\textit{random fields}. A representation of $(\mu_{x})_{x\in M}$ is the
mapping $\mathcal{F}: \Omega \times M \to N$, $(\omega, x) \mapsto
f_{\omega}(x)$ such that for each $x$
\begin{equation}
\label{eq.representation}
\mu_{x} = \mathcal{F}_{*}\mathbb{P}.
\end{equation}
Thus, we say that $(f_{\omega})_{\omega\in\Omega}$ represents
$(\mu_{x})_{x\in M}$ if the distribution of $\omega\mapsto
f_{\omega}(x)$ equals $\mu_{x}$ for all $x\in M$.

\section{On optimal transport}
\label{sec.optimal.transp}

The remaining part of this paper is based upon techniques from
\textit{optimal transport}. Our main reference is the book by
Villani~\cite{Vil09}. For the sake of completeness, we sample and
collect in this section the concepts to be used along the
way. Readers familiar with optimal transport may wish to skip this
section and only refer back to it when needed. 

The basic problem in \textit{optimal transport}, as introduced by
Monge, consisted in moving a given distribution  like a pile of
sand from one place to another with a minimal \textit{cost}. There are
several possible ways to generalise and tackle this problem. For
example, the given mass to be transported can be thought of as a
distribution in an appropriate probability space. In other words,
given measurable spaces $M,N$, and probability measures $\mu$ in
$\mathcal{P}(M)$ and $\nu$ in $\mathcal{P}(N)$, we seek for a
\textit{coupling}, or a way to connect these two measures.  More
generally, one has the following definition.
\begin{definition}
  Let $(M, \mu)$ and $(N, \nu)$ be two probability spaces. We couple
  $\mu$ and $\nu$ by constructing two random variables $X,Y$ on some
  probability space $(\Omega, \mathbb{P})$, such that $law(X) = \mu$,
  $law(Y) = \nu$. The law or distribution of $(X,Y)$ is called
  coupling of $(\mu, \nu)$.
\end{definition}
\noindent In our context, $\mu$ and $\nu$ are the only laws we shall
be interested in, so we choose $\Omega = M \times N$.
There are several examples of couplings arising in different
contexts. 

The first generalisation of  Monge's original problem we can think of
is given in terms of \textit{transport maps}. That is, given measurable spaces
$M,N$, probability measures $\mu$ in $\mathcal{P}(M)$ and $\nu$ in
$\mathcal{P}(N)$, we seek for measurable maps $T : M \to N$, 
such that for all Borel $E \subset N$ one has $ \mu(T^{-1}(E)) =
\nu(E)$. This is an example of a so-called \textit{deterministic
  coupling}. The requirement of a transport map, however, is a strong
condition, and  this problem  may not have a solution
unless more restrictions are made. The canonical example is when $\mu$
is a Dirac measure and $\nu$ is not.

In order to avoid ill-posed problems, one alternatively should look
for weak solutions of the transport problem as it has been proposed by
Kantorovich.  In this case, we focus on probability measures $\gamma$
in $\mathcal{P} (M \times N)$, whose projections (or marginals) are
$\mu$ and $\nu$. In other words, let $\Gamma(\mu, \nu) \subset
\mathcal{P}(M \times N)$, such that the canonical projections
$\pi_{\mathcal{P}(M)} : \Gamma(\mu, \nu) \to \mathcal{P}(M)$ and
$\pi_{\mathcal{P}(N)} : \Gamma(\mu, \nu) \to \mathcal{P}(N)$
hold. Then the marginals are given by the
push-forward $\pi_{\mathcal{P}(M)*}\gamma = \mu$ and
$\pi_{\mathcal{P}(N)*}\gamma = \nu$. The \textit{Kantorovich
  minimisation problem} consists in obtaining
\begin{equation}
\label{eq.optimal-trans}
C(\mu, \nu) = \inf_{\gamma \in \Gamma(\mu, \nu)}\int_{M \times N}c(x, y)d\gamma(x, y),
\end{equation}
where, for a given \textit{cost} function $c : M \times N \to [0,
+\infty]$, the infimum runs over the joint probabilities $\gamma$ in
$\Gamma(\mu, \nu)$. The joint probability measures are called
\textit{transport plans}, the ones achieving the minimum,
\textit{optimal transport plans}, and $C(\mu, \nu)$ the optimal
transport cost. Thus, this coupling is called optimal transport
coupling. Obviously, the solution of the Kantorovich minimisation
problem depends on the choice of the cost
function. Although we state several of the auxiliary
  results from optimal transport in full generality, for our purpose
  we shall only use the quadradic cost function. The following theorem guarantees
the existence of optimal coupling.
\begin{theorem}[Existence of optimal coupling {\cite[Theorem 4.1]{Vil09}}]
  Given two Polish spaces $(M, \mu)$ and $(N, \nu)$, and a lower
  semicontinuous cost function bounded from below, then there always exist
  optimal couplings of $(\mu, \nu)$.
\end{theorem}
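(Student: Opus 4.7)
The plan is to run the classical direct method of the calculus of variations on the functional $\gamma \mapsto \int_{M\times N} c \, d\gamma$, exploiting that $\Gamma(\mu,\nu)$ is tight and that the marginal constraints are closed under weak$^*$ convergence.

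First, I would note that $\Gamma(\mu,\nu)$ is non-empty, since the product measure $\mu \otimes \nu$ always has the correct marginals, and the infimum $C(\mu,\nu)$ in \rf{eq.optimal-trans} is a finite or infinite number bounded below (because $c$ is bounded below). If $C(\mu,\nu) = +\infty$ the statement is trivial, so assume it is finite and pick a minimising sequence $\gamma_n \in \Gamma(\mu,\nu)$ with $\int c \, d\gamma_n \to C(\mu,\nu)$.

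The second step is tightness of $\Gamma(\mu,\nu)$. Since $M$ and $N$ are Polish, by Ulam's theorem both $\mu$ and $\nu$ are tight, so for each $\eps>0$ there exist compacts $K_M \subset M$ and $K_N\subset N$ with $\mu(M\sm K_M)<\eps/2$ and $\nu(N\sm K_N)<\eps/2$. Then $K_M \times K_N$ is compact in $M\times N$, and for \emph{every} $\gamma\in\Gamma(\mu,\nu)$ one has
\begin{equation*}
\gamma\bigl((M\times N)\sm(K_M\times K_N)\bigr) \le \mu(M\sm K_M)+\nu(N\sm K_N)<\eps.
\end{equation*}
Hence $\Gamma(\mu,\nu)$ is uniformly tight, and by Prokhorov's theorem it is relatively compact in the weak$^*$ topology of $\mathcal{P}(M\times N)$. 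Extracting a subsequence, we may assume $\gamma_n \rightharpoonup \gamma^*$ weakly. Because the marginal maps $\gamma\mapsto\pi_{M*}\gamma$ and $\gamma\mapsto\pi_{N*}\gamma$ are weakly continuous (projections are continuous, so $\int\vp \,d\pi_{M*}\gamma_n = \int\vp\circ\pi_M \,d\gamma_n \to \int\vp\circ\pi_M \, d\gamma^*$ for every $\vp\in C_b(M)$), the limit $\gamma^*$ still has marginals $\mu$ and $\nu$, i.e.\ $\gamma^*\in\Gamma(\mu,\nu)$.

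The decisive third step, and the main obstacle, is showing lower semicontinuity of $\gamma \mapsto \int c\, d\gamma$ under weak$^*$ convergence when $c$ is merely lower semicontinuous and bounded below. Subtracting the lower bound, I may assume $c \ge 0$. The standard device is to approximate $c$ from below by an increasing sequence of bounded continuous functions, e.g.\ the Moreau--Yosida regularisations
\begin{equation*}
c_k(x,y) := \inf_{(x',y')\in M\tm N}\bigl\{c(x',y')+k\,d_{M\tm N}((x,y),(x',y'))\bigr\}\wedge k,
\end{equation*}
which are $k$-Lipschitz, bounded, and satisfy $c_k \uparrow c$ pointwise as $k\to\infty$. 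For each fixed $k$, weak$^*$ convergence yields $\int c_k\, d\gamma_n \to \int c_k\, d\gamma^*$, hence
\begin{equation*}
\int c_k \, d\gamma^* = \lim_{n\to\infty}\int c_k\, d\gamma_n \le \liminf_{n\to\infty}\int c\, d\gamma_n = C(\mu,\nu).
\end{equation*}
Letting $k\to\infty$ and applying monotone convergence on the left gives $\int c\, d\gamma^* \le C(\mu,\nu)$. Since $\gamma^*\in\Gamma(\mu,\nu)$, the reverse inequality holds by definition of infimum, so $\gamma^*$ achieves $C(\mu,\nu)$ and is an optimal coupling. The routine verification that $c_k$ is continuous and bounded and increases pointwise to $c$ is the only place where a careful check is needed; everything else is soft topology.
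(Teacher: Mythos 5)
Your proof is correct and is essentially the argument behind the paper's cited source (\cite[Theorem 4.1]{Vil09}, which the paper quotes without reproving): tightness of $\Gamma(\mu,\nu)$ inherited from tightness of the marginals plus Prokhorov, weak continuity of the marginal constraints, and lower semicontinuity of $\gamma\mapsto\int c\,d\gamma$ via an increasing sequence of bounded Lipschitz approximations and monotone convergence. The only point deserving the care you already flag --- that the truncated Moreau--Yosida regularisations $c_k$ are monotone in $k$ and converge pointwise to $c$ (including where $c=+\infty$), which uses lower semicontinuity --- is handled correctly, so there is no gap.
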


Notice that any transport map $T : M \to N$ induces a transfer plan
$\gamma$ defined by $(Id \times T)_{*}\mu$. In fact, we can also
impose conditions on the cost function such that the optimal coupling
is obtained by a deterministic coupling. The search for a
deterministic coupling (transport map) $T$ which minimises
Eq.~\ref{eq.optimal-trans} for a given cost function is called the 
\textit{Monge problem}. In other words, the Monge problem consists in finding
deterministic optimal couplings realising
\begin{displaymath}
\min \left \{ \int_{M}c(x, T(x))d\mu(x) : T_{*}\mu = \nu \right \},
\end{displaymath}
$c : M \times N \to [0, +\infty]$.
The following proposition ensures the
existence of a unique transport map solving the Monge problem.
\begin{proposition}[Solution of the Monge problem]
\label{prop.sol-monge-prob}
Let $M$ be a Riemannian manifold, $\mathcal{X}$ a closed subset of
$M$, with $dim(\partial\mathcal{X}) \leq n-1$ and $\mathcal{Y}$ an
arbitrary Polish space. Let $c : \mathcal{X} \times \mathcal{Y}
\to \mathbb{R}$ be a continuous cost function, bounded from below and
assume that for the
probability measures $\mu \in \mathcal{P}(\mathcal{X})$ and $\nu \in
\mathcal{P}(\mathcal{Y})$,  the optimal cost $C(\mu, \nu)$ is
finite. If the following conditions are fulfilled
\begin{enumerate}[i]
\item[i)] c is differentiable everywhere;
\item[ii)] $\mu$ is absolutely continuous;
\item[iii)] $\nabla_{x}c(x, \cdot)$ is injective where defined,
  i.e., if $x, y, y'$ are such that $\nabla_{x}c(x, y) = \nabla_{x}c(x,
  y')$, then $y = y'$,
\end{enumerate}
then there exists a unique (in law) optimal coupling $(x,y)$ of $(\mu,
\nu)$, and it is deterministic.
\end{proposition}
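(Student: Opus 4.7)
My plan is to follow the classical Brenier/Gangbo--McCann/Villani argument adapted to the stated setting. The existence theorem stated just above produces at least one optimal transport plan $\gamma \in \Gamma(\mu,\nu)$, since the hypotheses of continuity and lower boundedness of $c$ transfer directly and $C(\mu,\nu)<\infty$. I will then argue that every such $\gamma$ is in fact concentrated on the graph of a measurable map $T : \mathcal{X} \to \mathcal{Y}$, which simultaneously gives existence and uniqueness (in law) of a deterministic optimal coupling.

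The core step is Kantorovich duality. Because $c$ is continuous, bounded below, and $C(\mu,\nu)<\infty$, there exists a $c$-concave Kantorovich potential $\psi : \mathcal{X} \to \mathbb{R} \cup \{-\infty\}$ such that any optimal $\gamma$ is concentrated on the $c$-superdifferential $\partial^c \psi = \{(x,y) : \psi(x) + \psi^c(y) = c(x,y)\}$. Under hypothesis (i), namely continuity plus differentiability of $c$ in $x$, one checks that $\psi$ is locally Lipschitz on the interior of $\mathcal{X}$. Since $\mu$ is absolutely continuous and $\dim(\partial \mathcal{X}) \leq n-1$ forces $\mu(\partial \mathcal{X})=0$, Rademacher's theorem applied in local charts yields that $\psi$ is differentiable $\mu$-almost everywhere.

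At a differentiability point $x$ of $\psi$ and any $y$ with $(x,y) \in \mathrm{supp}(\gamma) \subset \partial^c \psi$, the function $x' \mapsto \psi(x') - c(x',y) + \psi^c(y)$ attains its maximum $0$ at $x'=x$; differentiating at $x$ gives
\[ \nabla \psi(x) = \nabla_x c(x, y). \]
Hypothesis (iii) says $\nabla_x c(x,\cdot)$ is injective, so $y$ is uniquely determined by $x$; set $T(x):=y$. Then $\gamma$ is concentrated on the graph of $T$, so $\gamma = (\mathrm{Id}\times T)_*\mu$ is deterministic. Because this characterisation of the support did not depend on the particular choice of optimal $\gamma$, uniqueness in law follows.

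The main obstacle is the $\mu$-a.e.\ differentiability of $\psi$ under the minimal regularity that $c$ is merely differentiable. With stronger assumptions (for instance $c$ locally semiconcave) this is immediate from Alexandrov's theorem; here one must instead extract local Lipschitz regularity of $\psi$ from continuity of $c$ and the pointwise control given by the $c$-transform, and then invoke Rademacher. The role of the hypothesis $\dim(\partial \mathcal{X}) \leq n-1$ is precisely to guarantee that the possibly irregular boundary is $\mu$-null, so that the a.e.\ differentiability on the interior is all that is required.
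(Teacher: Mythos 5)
The paper itself proves this proposition purely by citation: it is precisely Villani's Theorem 10.28 combined with Proposition 10.7 and Remark 10.33 of \cite{Vil09}, so your proposal is in effect a re-derivation of the cited result. Your skeleton is the right one and matches the proof behind that citation: existence of an optimal plan from the preceding existence theorem, a single Kantorovich potential $\psi$ on whose $c$-(super)differential \emph{every} optimal plan concentrates, the gradient relation $\nabla\psi(x)=\nabla_{x}c(x,y)$ at points of differentiability, hypothesis (iii) (the twist condition) to define $T(x)$, the hypothesis $\dim(\partial\mathcal{X})\le n-1$ together with absolute continuity of $\mu$ to discard the boundary, and uniqueness in law because one fixed potential works for all optimal plans, which are therefore all carried by the same graph.

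There is, however, a genuine gap at exactly the step you flag: the claim that ``one checks that $\psi$ is locally Lipschitz on the interior of $\mathcal{X}$'' is false under the stated hypotheses. The potential is an infimum $\psi(x)=\inf_{y}\bigl[c(x,y)-\phi(y)\bigr]$, and mere differentiability of each $x\mapsto c(x,y)$ provides no locally uniform (in $y$) Lipschitz control; without such control the infimum need not be locally Lipschitz, nor even continuous. Concretely, take $\mathcal{Y}=\mathbb{N}$ and $c(x,n)=-(x^{2}+1/n)^{1/4}$ on an interval: each $c(\cdot,n)$ is smooth, yet $\inf_{n}c(x,n)=-|x|^{1/2}$ fails to be Lipschitz at $0$, and such degeneracies can be made dense. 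So ``local Lipschitz plus Rademacher'' cannot work, and this is precisely why Villani's proof proceeds differently: differentiability of $c$ makes each $c(\cdot,y)$ superdifferentiable, hence $\psi$ is superdifferentiable at every point where the infimum is attained, i.e.\ at every $x$ with nonempty $c$-subdifferential, which holds $\mu$-almost everywhere; one then invokes the nontrivial measure-theoretic theorem that a function superdifferentiable at every point of a measurable set is differentiable at almost every point of that set (Theorem 10.8(ii) of \cite{Vil09}), whose proof decomposes the set into countably many pieces on which the function coincides with a Lipschitz function rather than applying Rademacher once globally. If you replace your Lipschitz claim by this superdifferentiability argument, the rest of your proof closes as written.
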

\begin{proof}
  The proof follows from Theorem 10.28, Proposition 10.7, and Remark
  10.33 of~\cite{Vil09}.
\end{proof}
\begin{corollary}

\label{cor.sol-monge-stab}
  Suppose that for each $k \in \mathbb{N}$ we have a sequence of
  continuous cost functions $c_{k} : \mathcal{X} \times \mathcal{Y}
  \to \mathbb{R}$ converging uniformly to $c : \mathcal{X} \times
  \mathcal{Y} \to \mathbb{R}$, where $c$ is defined as above. Let
  $(\nu_{k})_{k\in \mathbb{N}}$ be a sequence of probabilities on
  $\mathcal{Y}$ converging weakly to $\nu \in
  \mathcal{P(\mathcal{Y})}$, and assume that for each $k$ there exist
  measurable maps $T_{k} : \mathcal{X} \to \mathcal{Y}$, such that
  each $T_{k}$ is an optimal transport map between $\mu$ and
  $\nu_{k}$. Then $T_{k}$ converges to $T$ in probability, i.e.,
\begin{displaymath}
  \forall \varepsilon > 0 \quad \mu \left(\left\{x \in M; d\left(T_{k}(x), T(x)\right)\geq \varepsilon \right\} \right) \xrightarrow[k \to \infty]{} 0.
\end{displaymath}
\end{corollary}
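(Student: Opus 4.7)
The plan is to apply the standard stability of optimal transport under uniform convergence of costs, combine it with the uniqueness granted by Proposition~\ref{prop.sol-monge-prob}, and then upgrade weak convergence of graph-concentrated plans to convergence in probability of the underlying maps.

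First, I form the deterministic plans $\gamma_{k} := (\mathrm{Id} \times T_{k})_{*}\mu \in \Gamma(\mu, \nu_{k})$. Since $\mu$ is fixed and $\nu_{k} \to \nu$ weakly, both marginal families are tight, so by Prokhorov's theorem the family $\{\gamma_{k}\}$ is tight on $\mathcal{X}\times\mathcal{Y}$. Extract a subsequence (still denoted $\gamma_{k}$) with $\gamma_{k}\to\gamma_{\infty}$ weakly; its marginals are necessarily $\mu$ and $\nu$. Next I show that $\gamma_{\infty}$ is optimal for the limit cost $c$. For any competitor $\tilde{\gamma}\in\Gamma(\mu,\nu)$, construct approximants $\tilde{\gamma}_{k}\in\Gamma(\mu,\nu_{k})$ with $\tilde{\gamma}_{k}\to\tilde{\gamma}$ weakly, e.g. by gluing $\tilde{\gamma}$ to an optimal coupling of $\nu$ with $\nu_{k}$. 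Optimality of $\gamma_{k}$ for the cost $c_{k}$ gives
\[
\int c_{k}\, d\gamma_{k} \;\leq\; \int c_{k}\, d\tilde{\gamma}_{k}.
\]
Using $\|c_{k}-c\|_{\infty}\to 0$ together with lower semicontinuity of $\gamma\mapsto\int c\,d\gamma$ (valid since $c$ is continuous and bounded below), passage to the limit yields $\int c\, d\gamma_{\infty}\leq \int c\, d\tilde{\gamma}$. Hence $\gamma_{\infty}$ is optimal, and by Proposition~\ref{prop.sol-monge-prob} the unique such plan is $(\mathrm{Id}\times T)_{*}\mu$; thus every subsequential limit agrees, and the full sequence $\gamma_{k}$ converges weakly to $(\mathrm{Id}\times T)_{*}\mu$.

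Now I convert weak convergence of the graph plans into convergence in probability via Lusin's theorem: given $\delta>0$, pick a compact $K\subset\mathcal{X}$ with $\mu(\mathcal{X}\setminus K)<\delta$ on which $T$ is continuous. For fixed $\varepsilon>0$ the set
\[
F_{\varepsilon} := \{(x,y)\in K\times\mathcal{Y} : d(y,T(x))\geq\varepsilon\}
\]
is closed and disjoint from the graph of $T|_{K}$, so $\gamma_{\infty}(F_{\varepsilon})=0$. The Portmanteau theorem gives $\limsup_{k}\gamma_{k}(F_{\varepsilon})=0$, while $\gamma_{k}(F_{\varepsilon})=\mu\{x\in K : d(T_{k}(x),T(x))\geq\varepsilon\}$. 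Combined with $\mu(\mathcal{X}\setminus K)<\delta$ and letting $\delta\to 0$, this delivers the stated convergence in probability.

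The main obstacle is the $c$-optimality step. Since $c$ is only assumed bounded from below (not above), the upper tail of $\int c\, d\gamma_{k}$ requires care: either one truncates $c$ and exploits uniform convergence of $c_{k}$ on compacta together with tightness, or one invokes the general stability lemma \cite[Theorem 5.20]{Vil09}, which is tailored precisely to this situation and handles the passage to the limit cleanly. The construction of the approximating plans $\tilde{\gamma}_{k}$ from $\tilde{\gamma}$ is standard via gluing, provided one has weak convergence $\nu_{k}\to\nu$, which is given.
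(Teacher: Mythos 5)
Your proposal is correct and follows essentially the same route as the paper: the paper's proof consists precisely of invoking Proposition~\ref{prop.sol-monge-prob} together with \cite[Corollary 5.23]{Vil09}, and your argument is an unfolding of the proof of that corollary (tightness of the deterministic plans $(\mathrm{Id}\times T_{k})_{*}\mu$, optimality of the weak limit, identification of the limit plan via the uniqueness in Proposition~\ref{prop.sol-monge-prob}, and then the Lusin/Portmanteau upgrade from weak convergence of graph-concentrated plans to convergence in probability). The one delicate step you flag --- passing to the limit in optimality when $c$ is only bounded below, so that $\limsup_{k}\int c\,d\tilde{\gamma}_{k}\le\int c\,d\tilde{\gamma}$ is not automatic --- is a genuine issue with the competitor-approximation argument as sketched, and your fallback to \cite[Theorem 5.20]{Vil09} is exactly how the paper's cited source handles it (Villani's proof sidesteps the problem by passing $c_{k}$-cyclical monotonicity of the supports to the limit rather than comparing costs of approximating competitors).
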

\begin{proof}
  The proof follows from Proposition~\ref{prop.sol-monge-prob} above
  and~\cite[Corollary 5.23]{Vil09}.
\end{proof}
\begin{corollary}
\label{cor.sol-monge-square}
Let $M = \mathbb{R}^{n}$ and $c(x, y) = -x \cdot y$. Consider  two probability measures $\mu,
\nu$ on $M$, such that $\mu$ is absolutely
continuous, then the solution of Monge's problem can be written as
\begin{displaymath}
y = T(x) = x + \nabla \psi(x),
\end{displaymath}
where $\psi$ is some convex, lower semicontinuous function.
\end{corollary}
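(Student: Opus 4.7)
The plan is to specialise Proposition~\ref{prop.sol-monge-prob} to the bilinear cost $c(x,y)=-x\cdot y$ on $\mathbb{R}^n$ and then exploit the fact that $c$-concavity for this particular cost coincides with classical Legendre--Fenchel duality, giving Brenier's representation of $T$ as the gradient of a convex function, from which the form $T(x)=x+\nabla\psi(x)$ follows by absorbing the quadratic term.

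First I would verify the hypotheses of Proposition~\ref{prop.sol-monge-prob}. The cost $c(x,y)=-x\cdot y$ is $C^{\infty}$ and hence differentiable everywhere; the measure $\mu$ is absolutely continuous by assumption; and $\nabla_x c(x,y)=-y$ is trivially injective in $y$. Finiteness of the total cost $C(\mu,\nu)$, implicit in the corollary, is automatic whenever $\mu$ and $\nu$ have finite second moments, since then $-x\cdot y$ is integrable against any coupling with the given marginals. The proposition then produces a unique optimal transport map $T:\mathbb{R}^n\to\mathbb{R}^n$.

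Second, I would extract the explicit form of $T$ via duality. A $c$-concave Kantorovich potential $\varphi$ satisfies $\varphi(x)=\inf_y\{-x\cdot y-\varphi^c(y)\}$; setting $u:=-\varphi$ one obtains $u(x)=\sup_y\{x\cdot y+\varphi^c(y)\}$, which exhibits $u$ as a convex, lower semicontinuous function, being a supremum of affine maps in $x$. The Fenchel equality condition along the support of the optimal plan reads $u(x)+u^{*}(y)=x\cdot y$, and at every point where $u$ is differentiable (hence $\mu$-almost everywhere, since $\mu\ll m$ and $u$ is convex, so Rademacher applies) this forces $y=\nabla u(x)$. Hence $T(x)=\nabla u(x)$ $\mu$-a.e., with $u$ convex and lower semicontinuous.

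Finally, thanks to the identity $\tfrac{|x-y|^2}{2}=\tfrac{|x|^2}{2}+\tfrac{|y|^2}{2}-x\cdot y$, minimising against $c$ is equivalent, modulo marginal-dependent constants, to Brenier's quadratic problem; writing $\psi(x):=u(x)-\tfrac{|x|^2}{2}$ recasts the map as $T(x)=x+\nabla\psi(x)$ in the announced form, the convex lower semicontinuous potential of the statement being $u$ (equivalently, $\psi+\tfrac{|\cdot|^2}{2}$). The principal obstacle is the convexity step, which is precisely the Brenier--Rockafellar identification and rests crucially on the bilinear structure of $c$; beyond this, the a.e.\ existence of $\nabla u$ and uniqueness of $T$ follow from Proposition~\ref{prop.sol-monge-prob} together with Rademacher's theorem for convex functions.
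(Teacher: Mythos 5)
Your proof is correct in substance and follows essentially the same route as the paper: the paper's proof of Corollary~\ref{cor.sol-monge-square} is nothing more than a citation of Theorem 10.44, Particular Case 10.45 and Particular Case 5.3 of~\cite{Vil09}, and your argument---specialising Proposition~\ref{prop.sol-monge-prob} to the twisted bilinear cost and identifying $c$-convexity with ordinary convexity plus lower semicontinuity via the Legendre transform---is exactly the content of those citations, written out.

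One technical point deserves fixing, because as written it is a genuine (if easily repaired) gap: $c(x,y)=-x\cdot y$ is \emph{not} bounded from below on $\mathbb{R}^n\times\mathbb{R}^n$, and boundedness from below is one of the stated hypotheses of Proposition~\ref{prop.sol-monge-prob}, so you cannot invoke the proposition for $c$ directly as you do in your first step; you verify differentiability, absolute continuity of $\mu$ and the twist condition, but never this hypothesis. The repair is already contained in your last paragraph: under the finite-second-moment assumption you introduce, $|x-y|^2/2 = c(x,y)+|x|^2/2+|y|^2/2$ differs from $c$ by marginal-dependent finite constants, so the two costs have the same optimal plans, and the quadratic cost is continuous, bounded below, differentiable and twisted---so the proposition applies to it. That normalisation must come \emph{before} the appeal to Proposition~\ref{prop.sol-monge-prob}, not only at the end when you rewrite $T$. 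Finally, you are right to locate the convex lower semicontinuous potential in $u=\psi+\tfrac{|\cdot|^2}{2}$ rather than in $\psi$ itself: $\psi=u-\tfrac{|x|^2}{2}$ is in general only semiconvex, so the corollary's phrasing is loose on this point, and your reading (that $T$ is the gradient of a convex l.s.c.\ function, then rewritten as $x+\nabla\psi$) is the one consistent with the Brenier-type results the paper cites.
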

\begin{proof}
  The proof follows from Theorem 10.44, Particular case 10.45, and
  Particular case 5.3 of~\cite{Vil09}. See also
  Section~\ref{sec.reg.dens} below.
\end{proof}
\begin{remark}
  Proposition~\ref{prop.sol-monge-prob} is a slight variation of a more
  general theorem about the solution of the Monge problem. Its
  conditions can be weakened or replaced in a number of ways; see
  \cite[Theorem 10.28]{Vil09}.
\end{remark}

When the cost function is given in terms of distances in a metric
space, we can use (\ref{eq.optimal-trans}) in order to define a
distance between measures.
\begin{definition}[Wasserstein distances]
  Let $(M, d)$ be a Polish metric space, and $p \in [0, \infty)$. The
  Wasserstein distance of order $p$ between any two probability
  measures $\mu, \nu$ on $M$ is given by
\begin{equation}
\label{eq.wasserstein-dist}
W_{p}(\mu, \nu) = \left( \inf_{\gamma \in \Gamma(\mu, \nu)}\int_{M}d(x, y)^{p}d\gamma(x, y)\right)^{1/p}.
\end{equation}
\end{definition}
Using the Wasserstein distances we can define a space given by the
restriction on $\mathcal{P}(M) \times \mathcal{P}(M)$ on which $W_{p}$
takes finite values, or the space of probability measures with
\textit{finite moment of order p}.
\begin{definition}[Wasserstein spaces]
The Wasserstein space of order $p$ is given by 
\begin{equation}
\label{eq.wasserstein-space}
\mathcal{P}_{p}(M)= \left\{ \mu \in \mathcal{P}(M) : \int_{M}d(x_{0},x)^{p}\mu(dx)< \infty \right\}.
\end{equation}
The choice of $x_{0}$ is arbitrary and the space does not depend on this.
\end{definition}

As a last important example we shall mention the powerful Moser
coupling~\cite{Mos65, Vil09}. 
\begin{theorem}[Moser coupling]
  \label{thm.Moser-coupling}
  Consider a smooth compact Riemannian manifold $M$ and its volume
  form $m$. In addition, consider  H\"{o}lder
  continuous positive probability densities $\rho_{0}$ and $\rho_{1}$
  on $M$. Then there is a
  deterministic coupling of $\mu_{0} = \rho_{0} m$ and $\mu_{1} =
  \rho_{1} m$. In other words, there exists a measurable map $T$ such
  that for all Borel $E \subset M$, we have $\mu_{1}(E) =
  T_{*}\mu_{0}(E)$. Furthermore, if $\rho_{0}, \rho_{1}$ are $C^{k,
    \alpha}$ then $T$ is $C^{k +1, \alpha}$.
\end{theorem}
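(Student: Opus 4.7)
The plan is to apply Moser's classical deformation trick, adapted to the coupling formulation. First I would interpolate the two densities linearly, setting $\rho_{t} = (1-t)\rho_{0} + t\rho_{1}$ for $t \in [0,1]$. Because $\rho_{0}$ and $\rho_{1}$ are strictly positive and Lipschitz on the compact manifold $M$, the whole family $\{\rho_{t}\}$ inherits a Lipschitz constant and a positive lower bound that are uniform in $t$. This uniform lower bound is what keeps the vector field constructed below well defined and regular.

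Next I would search for a time-dependent vector field $X_{t}$ whose flow $\Phi_{t}$ transports $\mu_{0}=\rho_{0}m$ to $\mu_{t}=\rho_{t}m$. Differentiating the pushforward relation $(\Phi_{t})_{*}\mu_{0}=\mu_{t}$ in $t$ leads to the continuity equation
\begin{equation*}
\partial_{t}\rho_{t} + \operatorname{div}(\rho_{t}X_{t}) = 0.
\end{equation*}
Since $\partial_{t}\rho_{t}=\rho_{1}-\rho_{0}$, the task reduces to choosing $X_{t}$ with $\operatorname{div}(\rho_{t}X_{t})=\rho_{0}-\rho_{1}$. I would parametrise this by writing $\rho_{t}X_{t}=\nabla u$ for a single potential $u\colon M\to\mathbb{R}$ independent of $t$, and then solve the Poisson equation $\Delta u=\rho_{0}-\rho_{1}$ on $M$. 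The compatibility condition $\int_{M}(\rho_{0}-\rho_{1})\,dm=0$ is automatic because both densities integrate to one, so a solution exists, unique up to an additive constant.

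The regularity of $T$ now follows by combining elliptic theory with ODE theory. For Lipschitz densities one has $\rho_{0}-\rho_{1}\in C^{0,\alpha}$ for every $\alpha\in(0,1)$, so Schauder estimates give $u\in C^{2,\alpha}$; consequently $\nabla u$ is Lipschitz, and division by the Lipschitz positive function $\rho_{t}$ produces a vector field $X_{t}$ that is Lipschitz in $x$ and affine in $t$. The Picard--Lindel\"of theorem then yields a unique flow $\Phi_{t}$ of homeomorphisms of $M$, and $T:=\Phi_{1}$ is the desired transport map; by construction $T_{*}\mu_{0}=\mu_{1}$. If $\rho_{0},\rho_{1}\in C^{k,\alpha}$, iterating Schauder gives $u\in C^{k+2,\alpha}$, from which standard ODE regularity transfers the claimed $C^{k+1,\alpha}$ regularity onto the flow map.

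The main obstacle is bridging the infinitesimal continuity equation and the global pushforward identity: the vector field $X_{t}$ has only been designed to satisfy the linearisation, and one still must verify that $(\Phi_{t})_{*}\mu_{0}=\mu_{t}$ for every $t$, not merely to first order. I would carry this out by testing against an arbitrary continuous $\varphi$ on $M$ and differentiating both
\begin{equation*}
t\mapsto \int_{M}\varphi\bigl(\Phi_{t}(x)\bigr)\rho_{0}(x)\,dm(x) \qandq t\mapsto \int_{M}\varphi\,\rho_{t}\,dm,
\end{equation*}
then checking via the chain rule and integration by parts that both quantities satisfy the same ODE in $t$ with the same initial datum at $t=0$, so that they coincide at $t=1$. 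A secondary, routine check is that the positive lower bound on $\rho_{t}$ is preserved by the interpolation, which is immediate from convexity.
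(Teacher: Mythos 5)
Your construction is exactly the one the paper itself sketches in the remark following Theorem~\ref{thm.Moser-coupling} (following Villani): linear interpolation $\rho_{t}=(1-t)\rho_{0}+t\rho_{1}$, the Poisson equation $\Delta u=\rho_{0}-\rho_{1}$ (solvable since both densities integrate to one), the vector field $X_{t}=\nabla u/\rho_{t}$, and the time-one map of its flow. The existence part of your argument is essentially sound, with two small soft spots. First, $X_{t}$ is not affine in $t$ --- the denominator $\rho_{t}$ is affine, so $X_{t}$ is merely smooth in $t$, which is all Picard--Lindel\"{o}f needs. Second, your verification scheme for $(\Phi_{t})_{*}\mu_{0}=\mu_{t}$ does not close as stated: for a fixed test function $\varphi$ one gets $\frac{d}{dt}\int\varphi\,d\mu_{t}=\int\nabla\varphi\cdot X_{t}\,d\mu_{t}$, which involves the \emph{new} test function $\nabla\varphi\cdot X_{t}$, so the two quantities do not satisfy a common closed scalar ODE. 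The standard ways to close this are Moser's pullback computation $\frac{d}{dt}\,\Phi_{t}^{*}(\rho_{t}m)=\Phi_{t}^{*}\bigl((\partial_{t}\rho_{t}+\operatorname{div}(\rho_{t}X_{t}))m\bigr)=0$, or uniqueness of weak solutions of the continuity equation with a Lipschitz velocity field; either is routine here, but should be invoked explicitly.

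The genuine gap is the final regularity clause. Schauder does give $u\in C^{k+2,\alpha}$ and hence $\nabla u\in C^{k+1,\alpha}$, but the vector field is $X_{t}=\nabla u/\rho_{t}$, and division by $\rho_{t}\in C^{k,\alpha}$ destroys the derivative you just gained: $X_{t}$ is only $C^{k,\alpha}$ in $x$. Standard ODE theory makes the flow exactly as regular as the field --- flows do not gain a derivative --- so your argument as written yields $T=\Phi_{1}\in C^{k,\alpha}$, one derivative short of the claimed $C^{k+1,\alpha}$. (For $k=0$ with Lipschitz densities everything is fine: $X_{t}$ is Lipschitz and the flow is bi-Lipschitz, which is the existence statement.) The one-derivative gain is real but needs a different mechanism: in dimension one it follows from $T=F_{1}^{-1}\circ F_{0}$ with $C^{k+1,\alpha}$ cumulative distribution functions, and in general only the determinant improves for free via $\det DT=\rho_{0}/(\rho_{1}\circ T)\in C^{k,\alpha}$; the full-map statement is the content of the elliptic fixed-point method of Dacorogna and Moser, not of the deformation flow. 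To be fair, the paper proves no more than you do --- its ``proof'' is the same flow sketch plus a citation of Villani --- but your appeal to ``standard ODE regularity'' cannot justify the $C^{k+1,\alpha}$ conclusion and should be replaced by a citation or a genuinely different argument for that clause.
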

%
%\begin{remark}
  The map $T$ is explicitly given, for each $x \in M$, in terms of a solution
  of the elliptic equation \[ \Delta u(x) = \rho_{0} - \rho_{1},\]
  where $\Delta$ denotes the Laplace operator. The transport map is
  obtained by defining the locally Lipschitz vector field
\[
\xi(t, x)=\frac{\nabla u(x)}{(1-t)\rho_{0}(x) + t\rho_{1}(x)},\]
which integrates to the flow $(T_{t}(x))_{0 \leq t \leq 1}$ with an
associated family of measures $(\mu_{t})_{0 < t < 1}$. In particular,
the time-1 map pushes forward $\mu_{0}$ to $\mu_{1}$.  See~\cite{Vil09}
for more details.
%\end{remark}

%%%%%%%%%%%%%%%%%%%%%%%%%%%%%%%%%%%%%%%%%%%%%%%%%%%%%%%%%%%%%%%%%%
%\vspace{1cm}

\section{Representation by measurable random maps}
\label{sec.repr.meas}

From this section we begin to apply the techniques from optimal
transport just presented. We shall start by tackling the problem of
representing a Markov chain by measurable continuous maps. Our result
implies Theorem 1.1 by Kifer~\cite[Ch. 1]{Kif86}. We also treat the
case of maps between different spaces, as introduced in
Section~\ref{sec.Markoc-ch}. The main result of this section is
\begin{maintheorem}
\label{thm.main.meas.maps}
Let $M$ be a locally compact metric space and $N$ a locally compact
Riemannian manifold. Consider a measurable family of probability
measures $(\mu_{x})_{x\in M}$ in $\mathcal{P}(N)$ with finite
$p$-moments for $p \geq 1$, i.e. for some $y\in N$ \[\sup_{x\in
  M}W_{p}(\delta_{y},\mu_{x})<\infty.\]
 
Then there exist separable random measurable maps $(f_{\omega}:M\to
N)_{\omega\in\Omega}$ representing $(\mu_{x})_{x\in M}$.
\end{maintheorem}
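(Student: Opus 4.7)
The plan is to take $\Omega = N$ equipped with a reference probability measure $\nu = \rho$ chosen to be absolutely continuous with respect to the Riemannian volume on $N$ and compactly supported, and to define $f_\omega(x) := T_x(\omega)$, where $T_x \colon N \to N$ is an optimal transport map from $\rho$ to $\mu_x$. The argument then splits into three tasks: producing the family $\{T_x\}_{x \in M}$ from optimal transport, establishing measurability of $x \mapsto T_x$ into an appropriate function space, and upgrading this to joint Borel measurability of $(x,\omega) \mapsto T_x(\omega)$.

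For the first task, for each fixed $x$ I would invoke Proposition~\ref{prop.sol-monge-prob} (equivalently the Brenier--McCann theorem for manifolds) with cost $c(y,z) = d_N(y,z)^q$ for some $q > 1$. Compactness of the support of $\rho$, together with the triangle inequality in $W_p$ and the hypothesis $\sup_{x} W_p(\delta_y,\mu_x) < \infty$, guarantees finiteness of the optimal cost $W_q^q(\rho,\mu_x)$; absolute continuity of $\rho$ then yields a unique optimal transport map $T_x$ with $(T_x)_* \rho = \mu_x$.

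Next, the stability of optimal transport (Corollary~\ref{cor.sol-monge-stab}) asserts that whenever $\mu_{x_k} \to \mu_x$ weakly with matching moments, the corresponding transport maps satisfy $T_{x_k} \to T_x$ in $\rho$-probability. The space $L^0(\rho; N)$ of equivalence classes of $\rho$-measurable maps $N \to N$ under convergence in probability is Polish, and stability produces a continuous map $\mathcal{P}_p(N) \to L^0(\rho; N)$, $\mu \mapsto T^\mu$. Composing with the Borel assignment $x \mapsto \mu_x$ therefore yields a Borel measurable map $M \to L^0(\rho; N)$.

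Finally, I would upgrade this to joint Borel measurability using the Lusin theorem stated in the excerpt. For each $n \in \N$ it provides a closed set $K_n \subset M$ with $m(M \setminus K_n) < 1/n$ on which $x \mapsto \mu_x$, and hence $x \mapsto T_x$, is continuous into $L^0(\rho; N)$. On each $K_n$ I would approximate $x \mapsto T_x$ by simple step functions valued in $L^0(\rho; N)$ and pass to the limit to obtain a jointly Borel measurable $F_n \colon K_n \times N \to N$ with $F_n(x, \cdot) = T_x(\cdot)$ $\rho$-a.e. Patching the $F_n$ along the exhaustion $\bigcup_n K_n$ and extending arbitrarily off an $m$-null remainder gives a jointly Borel map $F \colon M \times N \to N$; setting $f_\omega(x) := F(x,\omega)$ delivers the representation, since $F(x, \cdot)_*\rho = \mu_x$ for every $x$. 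The main obstacle I anticipate is precisely this patching step: each $T_x$ is defined only $\rho$-a.e., so the negligible sets depend on $x$ and must be reconciled coherently as $x$ varies, but the Polish structure of $L^0(\rho; N)$ combined with Lusin's theorem supplies enough regularity to carry it through.
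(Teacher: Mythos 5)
Your proposal follows essentially the same route as the paper's own proof: a fixed absolutely continuous reference measure on $N$, unique optimal transport maps $T_{x}$ from Proposition~\ref{prop.sol-monge-prob}, stability of optimal maps (Corollary~\ref{cor.sol-monge-stab}) together with Lusin's theorem to control the dependence on $x$ in the topology of convergence in probability, and the definition $f_{\omega}(x)=T_{x}(\omega)$ on $(\Omega,\tau)=(N,\nu)$. If anything, you are more careful than the paper on the two points it glosses over --- using a strictly convex cost $d^{q}$ with $q>1$ (the cost $d^{p}$ with $p=1$ fails the twist condition needed for uniqueness) and spelling out the joint-measurability step that the paper dismisses with ``without loss of generality'' --- though in the final patching you should extend off $\bigcup_{n}K_{n}$ by Borel versions of $T_{x}$ itself (obtainable from a global countably-valued approximation of the Borel map $x\mapsto T_{x}$ into $L^{0}(\rho;N)$) rather than ``arbitrarily,'' since the representation must hold for \emph{every} $x\in M$, not merely $m$-almost every $x$.
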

\begin{proof}
  Let $\nu$ be any probability measure absolutely continuous with
  respect to some volume measure on $N$, such that
  $W_{p}(\nu,\delta_{y})<\infty$. Then
  Proposition~\ref{prop.sol-monge-prob} shows that for each $x\in M$
  there is a unique optimal coupling realised by a (measurable)
  transport map $T_{x}:N\to N$, i.e., \[
  W_{p}(\nu,\mu_{x})^{p}=\int d(y,T_{x}(y))^{p}d\nu(y).\]
  Let $m$ be a Borel measure on $M$. By Lusin's theorem
  (Theorem~\ref{thm.Lusin}), given the family $(\mu_{x})_{x\in M}$ and
  $A \subset M$ with finite measure, for every $\delta>0$, there exists
  a set $K \subset A$, such that one has $m(A \backslash K) < \delta$
  and the restriction of the measurable family of probability $x \mapsto
  \mu_{x}$ is continuous on $K$. Take any sequence $x_{k} \to x$ with
  $\{x, x_{1}, x_{2},\ldots\} \subset K$. Then we have that
  $\mu_{x_{k}} \to \mu_{x}$, and Corollary~\ref{cor.sol-monge-stab}
  implies that $\nu\left(\{y \in N;~ d(T_{x_{k}}(y), T_{x}(y) \geq
    \varepsilon \}\right) \to 0$. Thus, these maps vary continuously
  in $x \in K$. Setting $(\Omega,\mathbb{P})=(\supp \nu,\nu)$, we can
  take for some random variable $Y:\Omega\to N$ with
  $\mbox{law}(Y)=\nu$ on $N$, and
  \[f_{\omega}(x):=T_{x}(Y(\omega)), ~x \in K, ~\omega \in \Omega.\]
  It follows that the maps satisfy (\ref{eq.representation}), thus
  they represent the Markov chain. Moreover, they are measurable in
  $\omega$ and continuous in $x \in K$. Therefore, they are separable
  and jointly measurable on $\Omega \times K$. The sets $K$ such that
  this property holds generate a $\sigma$-algebra. Thus, there is a
  unique extension to the completion of the generated $\sigma$-algebra
  of $\Omega \times A$, and therefore, the random maps are separable
  and jointly measurable on $\Omega \times A$.
\end{proof}

%%%%%%%%%%%%%%%%%%%%%%%%%%%%%%%%%%%%%%%

\section{Representation by continuous random maps}
\label{sec.repr.cont}

In this section we apply optimal transport and regularity theory to
give conditions for the representation of Markov chains by continuous
random maps. The main result of this section is the following theorem, which
is closely related to the main result of Quas~\cite{Qua91}. 

\begin{maintheorem}
\label{thm.main.cont.repres}
Let $M$ be a complete separable metric space and $N$ a compact
Riemannian manifold without boundary with normalised volume measure
$m$. Assume $(\mu_{x})_{x\in M}$ in $\mathcal{P}(N)$ to be a
continuous family of probability measures, where each $\mu_{x}$ is
absolutely continuous with respect to $m$ and has positive H\"{o}lder
continuous (for some exponent $\alpha >0$) probability density varying
continuously with $x \in M$ with respect to the $C^{0}$-topology.

Then $(\mu_{x})_{x\in M}$ can be represented by random continuous
maps $(f_{\omega})_{\omega\in\Omega}$.
\end{maintheorem}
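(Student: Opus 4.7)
The plan is to reduce everything to Proposition~\ref{prop.cont-repres} with $\nu = m$, the normalized Riemannian volume on $N$, by constructing the family $(T_x)_{x\in M}$ via Moser's coupling (Theorem~\ref{thm.Moser-coupling}). For each fixed $x\in M$, apply Moser's theorem with $\rho_0 \equiv 1$ and $\rho_1 = \rho_x$: since $\rho_x$ is positive and $C^{0,\alpha}$, this yields a $C^{1,\alpha}$-map $T_x : N \to N$ satisfying $(T_x)_* m = \mu_x$. In particular each $T_x$ is continuous on $\operatorname{supp}\nu = N$, so the whole theorem reduces to verifying that $x \mapsto T_x$ is continuous in the $C^0$-topology; invoking Proposition~\ref{prop.cont-repres} with $(\Omega,\tau) = (N,m)$ then delivers the desired random continuous maps.

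To establish $C^0$-continuity of $x \mapsto T_x$ at a point $x_0 \in M$, I would trace the explicit Moser construction from the Remark following Theorem~\ref{thm.Moser-coupling}. First, the function $u_x$ solves the elliptic equation $\Delta u_x = 1 - \rho_x$ on the compact boundaryless manifold $N$ (say normalized to have zero mean). By standard Schauder theory, $u_x$ is $C^{2,\alpha}$ and depends continuously on $\rho_x$ in the $C^{2,\alpha}$-norm; in particular $\nabla u_x$ varies continuously in $C^0$ as $x$ varies. Second, by continuity of $x \mapsto \rho_x$ in $C^0$ and strict positivity of $\rho_{x_0}$, there is a neighborhood $U$ of $x_0$ and a constant $c_0 > 0$ such that $\rho_x \ge c_0$ uniformly for $x \in U$; hence the interpolated density $(1-t) + t\rho_x(y)$ is bounded away from zero uniformly in $(t,y,x) \in [0,1]\times N \times U$. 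Third, the time-dependent vector field
\[
\xi_x(t,y) = \frac{\nabla u_x(y)}{(1-t) + t\rho_x(y)}
\]
therefore depends continuously on $x$ in the uniform norm, and is Lipschitz in $y$ uniformly on $U$. Finally, $T_x$ is the time-1 map of the flow of $\xi_x$; continuous dependence of ODE solutions on a parameter (a Gronwall estimate) gives $d_{C^0}(T_x, T_{x_0}) \to 0$ as $x \to x_0$, which is exactly the hypothesis of Proposition~\ref{prop.cont-repres}.

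The main obstacle is the verification of this continuous dependence of the Moser construction on the parameter $x$: one must invoke elliptic regularity with continuous (rather than merely measurable) dependence on the right-hand side, secure uniform positivity of the interpolated density in a neighborhood of $x_0$, and then convert these into uniform continuity of the flow. Each of these ingredients is classical, but it is essential that the hypotheses of the theorem, namely positivity of $\rho_x$, H\"older regularity in $y$, and $C^0$-continuity in $x$, are exactly what makes all three work simultaneously. Once this is established, the conclusion is an immediate application of the already-proved Proposition~\ref{prop.cont-repres}.
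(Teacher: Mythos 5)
Your proposal is correct and takes essentially the same route as the paper: Moser's coupling via the solution of $\Delta u^{(x)}=1-\rho^{(x)}$ on $N$, the interpolated vector field $\xi_{x}(t,y)=\nabla u^{(x)}(y)/((1-t)+t\rho^{(x)}(y))$, its time-$1$ flow $T_{x}$ with $(T_{x})_{*}m=\mu_{x}$, and $C^{0}$-continuity of $x\mapsto T_{x}$ feeding into Proposition~\ref{prop.cont-repres} with $(\Omega,\tau)=(N,m)$. You actually supply more detail than the paper (uniform lower bound on the interpolated density, Gronwall for the flow); the only minor caveat is that continuous dependence of $u_{x}$ in the full $C^{2,\alpha}$-norm would require $C^{0,\alpha}$-continuity of $\rho_{x}$ in $x$, whereas the stated $C^{0}$-continuity yields dependence in $W^{2,p}$ and hence in $C^{1}$ --- which is all your Gronwall step needs.
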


The proof is based on an application of Moser's coupling. 
We start with the following result.
\begin{proposition}
  \label{prop.cont-repres}
   Let $M$ and $N$ be complete separable metric spaces and consider
  $(\mu_{x})_{x\in M}$ in $\mathcal{P}(N)$ a continuous family of
  probability measures. Suppose for a fixed measure
  $\nu\in\mathcal{P}(N)$ with compact support there exists a family of
  continuous maps $(T_{x}:\operatorname{supp}\nu\to N)_{x\in M}$
  varying continuously in the $C^{0}$-topology such that \[
  (T_{x})\nu=\mu_{x}.\] Then $(\mu_{x})_{x\in M}$ can be represented
  by random continuous maps $(f_{\omega})_{\omega\in\Omega}$, such
  that $(\Omega, \mathbb{P})=(\operatorname{supp}\nu,\nu)$.

  If, in addition, $T_{x}$ varies H\"{o}lder or Lipschitz continuously
  with respect
  to $x\in M$ then so does $f_{\omega}$ with the same constants
  (resp. exponents).
\end{proposition}
\begin{proof}
  Set $(\Omega,\mathbb{P})=(\operatorname{supp}\nu,\nu)$ and let
  $X:\Omega\to\Omega$ be any random variable such that
  $\operatorname{law}X=\nu$. Define
\[
f_{\omega}(x)=T_{x}(X(\omega)).\] By construction,
$(f_{\omega})_{\omega\in\Omega}$ represents $(\mu_{x})_{x\in M}$. We
need to show that $f_{\omega}:M\to N$ is continuous. Since $T_{x}$
varies continuously with respect to the $C^{0}$-topology and its
domain is compact we have for a fixed $T_{x}$ \[
d(T_{x}(X(\omega)),T_{y}(X(\omega))\le
d_{C^{0}}(T_{x},T_{y})<\epsilon\] whenever $d(x,y)<\delta$ for
sufficiently small $\delta=\delta(\epsilon,x)>0$.
\end{proof}
\begin{remark}
  \label{rmk.viana.exam}
  Proposition~\ref{prop.cont-repres} above generalises Example 1.7 by
  Benedicks-Viana~\cite{BeV06}, where $\nu$ is the Lebesgue measure on
  $[0,1]^{n}$ and $T_{x}$ is the inverse of a rearrangement $S_{x}$ of
  the positive measure
  $\mu_{x}\in\mathcal{P}([-\epsilon,\epsilon]^{n})$ to $[0,1]^{n}$,
  assuming $S_{x}$ is a homeomorphism which varies continuously in
  $C^{0}$.
\end{remark}
%
%\begin{remark}
In order to obtain random continuous maps, it is actually enough to
assume that $x\mapsto T_{x}$ is pointwise continuous $\nu$-a.e. Then,
if $x\to y$ for $\nu$-a.e. $\omega$ we have $T_{x}(\omega)\to
T_{y}(\omega)$.
%\end{remark}
%
%
%\begin{remark}
We remark also that the assumption on the continuity of each $T_{x}$
could be relaxed by using Lusin's theorem. Furthermore, using a
general version of the Kolmogorov-Chentsov continuity lemma for random
fields obtained in \cite{Pot09}, a similar result holds if we only
assume that each $T_{x}$ is Borel measurable converging fast enough as
$x\to y$ in the topology of convergence in probability with respect to
$(\Omega,\mathbb{P})$.
%\end{remark}

%

%
\begin{proof}[Proof of Theorem B]
  Let $\mu_{x}=\rho^{(x)}dm$. Then, by
    Theorem~\ref{thm.Moser-coupling}, there exists a coupling of
    $(m,\mu_{x})$ induced by the time-$1$ map of a Lipschitz
    continuous vector field varying continuously with respect to $x\in
    M$. It is given by the solution of the following elliptic equation
    on $N$\[ \Delta u^{(x)}=1-\rho^{(x)}.\] 
    Since $\rho^{(x)}$ is $C^{0,\alpha}$, Schauder's theorem, see for
    example~\cite[Ch. 11.2]{Jos06}, implies that $u^{(x)}$ is
    $C^{2,\alpha}$. So we define the vector field on $N$ \[
    \xi_{x}(t,y)=\frac{\nabla u^{(x)}(y)}{(1-t)+t\rho^{(x)}(y)}.\]
    According to our assumptions it is well-defined and integrates to
    a flow $T_{x,t}:N\to N$. Indeed, since $\rho$ is positive and
    $C^{0,\alpha}$, the vector field $\xi$ is likewise $C^{0,\alpha}$
    w.r.t. $y$. Therefore, the flow is $C^{1,\alpha}$
    w.r.t. $y$. Furthermore, we have \[(T_{x,1})_{*}m=\mu_{x}.\]
    Notice that since the densities vary continuously with respect to
    the $C^{0}$-topology, so does the vector fields. This continuous
    dependence gives rise to a family of $T_{x,1}$ also varying
    continuously with respect to the $C^{0}$-topology. Then,
    Proposition~\ref{prop.cont-repres} implies that $(\mu_{x})_{x\in
      M}$ can be represented by random continuous maps
    $(f_{\omega})_{\omega\in\Omega}$.
\end{proof}

%%%%%%%%%%%%%%%%%%%%%%%%%%%%%%%%%%%%%%%

\section{Regularity of densities}
\label{sec.reg.dens}

The next step is to establish conditions on representations of Markov
chains by random diffeomorphisms. Before doing that, we shall again
take up the discussion on optimal transport and its regularity
properties applied to the regularity of the densities of Markov
chains. We will focus on families of probabilities on
$\mathbb{R}^{n}$. After that, we show how to use the results of this
section on Riemannian manifolds via lifting and embedding
techniques. We start with some technicalities, showing how convexity
of the support of measures is related to the regularity of the
transport maps.

Consider lower semi-continuous functions $\phi : U \subset
\mathbb{R}^{n} \to \mathbb{R} \cup \{ +\infty \}$, and $\psi : U'
\subset \mathbb{R}^{n} \to \mathbb{R}$, which will be hereafter called
\textit{potentials}. Then, let us define at $y \in U' \subset
\mathbb{R}^{n}$
\begin{displaymath}
  \phi^{c}(y) = \sup_{x \in U}(-c(x,y) - \phi(x)).
\end{displaymath}
It is called the \textit{cost-transform} or \textit{c-transform} of $\phi$.
Furthermore, let us define
\begin{displaymath}
G_{\phi}(x)=\{y \in U' : \phi(x) + \phi^{c}(y) = -c(x,y)\}.
\end{displaymath}
Then one can prove the following general result.
\begin{theorem}
\label{thm.monge.potential}
  Let $U,U'\subset\mathbb{R}^{n}$ be bounded domains of
  $\mathbb{R}^{n}$, and $c(x,y) = -x \cdot y$. Let $\mu$, and $\nu$ be
  probability measures on $U$, and $U'$, respectively. Assume that $\mu$
  does not give mass to sets of Hausdorff dimension less than or equal
  to $n-1$. Then there exist a $\mu$-a.e. unique $T$ solving the
  Monge problem for this cost function $c$. Moreover, there is a convex potential $\phi$ on
  $U$, such that $T = G_{\phi}$. Finally, if $\psi$ is convex and
  satisfies $(G_{\psi})_{*}\mu=\nu$, then $\nabla \psi = \nabla \phi$,
  $\mu$-a.e.
\end{theorem}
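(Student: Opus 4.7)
The plan is to derive this as an instance of Brenier's theorem for the quadratic cost, via Kantorovich duality and Rockafellar's characterisation of cyclically monotone sets. Since $U$ and $U'$ are bounded and $c(x,y) = -x \cdot y$ is continuous and bounded below on $U \times U'$, the existence theorem for optimal couplings quoted in Section~\ref{sec.optimal.transp} produces an optimal plan $\gamma \in \Gamma(\mu,\nu)$. Every optimal plan has $c$-cyclically monotone support, and for $c(x,y) = -x\cdot y$ this reduces to ordinary cyclic monotonicity: $\sum_i \langle y_i, x_{i+1} - x_i\rangle \le 0$ along any cycle $(x_i,y_i) \in \operatorname{supp}(\gamma)$. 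By Rockafellar's theorem, such a monotone relation is contained in the subdifferential graph of a proper lower semicontinuous convex function $\phi : U \to \R \cup \{+\infty\}$. The $c$-conjugate $\phi^c(y) = \sup_x(x\cdot y - \phi(x))$ is then the classical Legendre transform of $\phi$, and the Fenchel--Young equality $\phi(x) + \phi^c(y) = x \cdot y$ holds precisely when $y \in \partial \phi(x)$; consequently $G_\phi = \partial \phi$ and $\operatorname{supp}(\gamma) \subset \{(x,y) : y \in G_\phi(x)\}$.

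The main technical step is to replace this multivalued relation by a single-valued transport map $\mu$-almost everywhere. A convex function on $\R^n$ fails to be differentiable only on a set of Hausdorff dimension at most $n-1$ (its non-differentiability set is in fact $(n-1)$-rectifiable), and this is exactly where the hypothesis on $\mu$ enters: $\phi$ is differentiable $\mu$-a.e., so that $G_\phi(x) = \{\nabla \phi(x)\}$ $\mu$-a.e., and setting $T(x) := \nabla \phi(x)$ one obtains $\gamma = (\operatorname{Id} \times T)_* \mu$, which solves the Monge problem with $T_* \mu = \nu$. I expect the combination of Rockafellar's construction with this $\mu$-a.e. differentiability lemma for convex functions to be the main technical load, since it is what converts the duality-level information into a pointwise map.

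For the uniqueness assertion, suppose $\psi$ is a convex function satisfying $(G_\psi)_* \mu = \nu$. The same differentiability argument gives $G_\psi(x) = \{\nabla \psi(x)\}$ $\mu$-a.e., and the plan $(\operatorname{Id} \times \nabla \psi)_* \mu$ is supported on the cyclically (hence $c$-cyclically) monotone graph of $\partial \psi$, so it is itself an optimal plan from $\mu$ to $\nu$. Since the cost $c(x,y) = -x\cdot y$ satisfies the twist condition trivially ($\nabla_x c(x,y) = -y$ is injective in $y$) and $\mu$ does not charge $(n-1)$-dimensional sets, Proposition~\ref{prop.sol-monge-prob} (together with Corollary~\ref{cor.sol-monge-square}) yields uniqueness of the optimal coupling, which forces $\nabla \psi = \nabla \phi$ $\mu$-almost everywhere.
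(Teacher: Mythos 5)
Your argument is essentially correct, but note that the paper does not prove this statement at all: Theorem~\ref{thm.monge.potential} is quoted verbatim from Loeper \cite[Theorem 2.7]{Loe09}, which is itself the Brenier--McCann theorem in Loeper's notation. What you have written is the standard proof underlying that citation, and each step is sound: existence of an optimal plan for the continuous bounded cost $c(x,y)=-x\cdot y$ on the bounded sets $U\times U'$; $c$-cyclical monotonicity of its support, which for this cost is classical cyclical monotonicity; Rockafellar's theorem producing the convex potential $\phi$ with $\operatorname{supp}\gamma\subset\operatorname{graph}(\partial\phi)$; the Fenchel--Young identification $G_\phi=\partial\phi$ (with $\phi^c$ the Legendre transform, matching the paper's definition of $\phi^c$ and $G_\phi$); and the Zaj\'{\i}\v{c}ek-type fact that the non-differentiability set of a convex function has Hausdorff dimension at most $n-1$, which is precisely where the hypothesis on $\mu$ is used to pass from the multivalued relation to the map $T=\nabla\phi$ with $\gamma=(\operatorname{Id}\times T)_{*}\mu$. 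You correctly identified this differentiability step as the one carrying the technical load.

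One caveat on the uniqueness step: your appeal to Proposition~\ref{prop.sol-monge-prob} is slightly off, because that proposition assumes $\mu$ absolutely continuous, which is strictly stronger than the hypothesis here (a measure can charge no set of dimension $\le n-1$ yet be singular). Two clean repairs: either invoke the variant of Villani's Theorem 10.28 whose hypothesis is exactly that $\mu$ does not charge the non-differentiability sets of $c$-convex functions (for this cost, sets of dimension $\le n-1$), or argue directly without any twist-condition citation: if $\gamma_1=(\operatorname{Id}\times\nabla\phi)_{*}\mu$ and $\gamma_2=(\operatorname{Id}\times\nabla\psi)_{*}\mu$ are both optimal (your cyclical-monotonicity argument does show $\gamma_2$ is optimal, since for a continuous bounded cost monotone support implies optimality), then $\tfrac12(\gamma_1+\gamma_2)$ is optimal as well, hence concentrated on the graph of $\partial\theta$ for a single convex $\theta$; since $\partial\theta$ is single-valued $\mu$-a.e., both $\nabla\phi(x)$ and $\nabla\psi(x)$ must equal $\nabla\theta(x)$ for $\mu$-a.e.\ $x$, giving $\nabla\psi=\nabla\phi$ $\mu$-a.e.\ and the $\mu$-a.e.\ uniqueness of $T$ simultaneously. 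With that adjustment your proof is complete and matches the intended source.
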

\noindent
See, for example, \cite[Theorem 2.7]{Loe09}. The condition of not
giving mass to sets of Hausdorff dimension less than or equal to $n-1$
is satisfied if for some $p > n$, $\mu$ has $L^{p}$-density; see
Proposition 3.3 of \cite{Loe09}. In fact, for the cost function
$c(x,y) = -x \cdot y$, one can show that $\phi = \psi^{c}$, where
$\psi^{c}$ is the c-transform of $\psi$. In the case of this
particular cost function, they are Legendre transforms of each
other. See Particular case 5.3, and Definition 5.7 in~\cite{Vil09}.

Regarding the regularity of $\phi$, one can prove the following result
that we shall use in the sequel.

\begin{theorem}
\label{thm.loeper-reg}
Assume $c(x,y) = -x \cdot y$, and let $U,U'\subset\mathbb{R}^{n}$ be
bounded strictly convex. Suppose $\mu$ and
$\nu$ are probability measures on $U$ and $V^{'} \subset U'$,
respectively, with $V'$ being convex.
Assume that for the convex potential $\phi$ and the cost function
$c(x,y)$ we have $(G_{\phi})_{*}\mu=\nu$. Furthermore, denoting the
volume measure by $m$, assume that $\nu\ge \kappa m$ on $V^{'}$ for
some $\kappa >0$, and
$\mu$ satisfies for some $p\in]n,\infty]$ and $C_{\mu} > 0$,
\begin{equation}
\label{eq.integrability}
\mu(B_{\varepsilon}(x))\le C_{\mu}\varepsilon^{n(1-\frac{1}{p})} \quad \mbox{for all \ensuremath{\varepsilon>0} and \ensuremath{x \in U}}.
\end{equation}
%and
%
Then $\phi$ is continuously differentiable on
$U_{\delta}=\{x\in U\,|\, d(x,\partial U)>\delta\}$ with
H\"{o}lder continuous derivatives for \textup{$\delta>0$}. In
particular, for some $\beta\in(0,1)$ and $\mathcal{C}$ depending only
on $U$,\textup{ $U'$,} $\kappa > 0$, $\delta>0$, $p$  and
$C_{\mu}$ \[
\|\phi\|_{C^{1,\beta}(U_{\delta})}\le\mathcal{C}.\]
If, furthermore, $\mu$ is supported on some $\bar{V}$ compactly
contained in $U$ then $G_{\phi}$ is a H\"{o}lder continuous map
with H\"{o}lder norm  bounded by $\mathcal{C}$.
\end{theorem}
\begin{proof}
  The proof directly follows from Theorem 3.4, Theorem 3.5 and
  Proposition 3.3 of \cite{Loe09} using the particular chosen cost
  function $c(x,y) = -x \cdot y$.
\end{proof}
\noindent We remark that the integrability condition
(\ref{eq.integrability}) is satisfied if, for example, for some $p >
n$, $\mu$ has $L^{p}$-density. See Proposition 3.3 of \cite{Loe09}.

Consider some compact $K \subset \mathbb{R}^{n}$, and assume
$(\mu_{x})_{x\in K}$ to be a family of measures, such that each
$\mu_{x}$ satisfies the assumption of $\mu$ in
Theorem~\ref{thm.loeper-reg}. Then, from
Theorem~\ref{thm.monge.potential} it follows that we can define
optimal transport maps $S_{x}$ between $\mu_{x}$ and $\nu =
m_{|[0,1]^{n}}$. Furthermore, from Theorem~\ref{thm.loeper-reg}, these
maps are H\"{o}lder continuous with H\"{o}lder norm bounded by a
constant only depending on some compact convex neighbourhood
$\Omega_{x}$ of the support of $\mu_{x}$. In particular, by uniqueness
of the optimal transport map, we have the following Lemma.
\begin{lemma}
\label{lemma.cont-fam}
  Let $(\mu_{x})_{x\in K}$ be a continuous family of probability
  measures on some compact set $K \subset \mathbb{R}^{n}$. Suppose
  that each $\mu_{x}$ satisfies the assumptions of
  Theorem~\ref{thm.loeper-reg} with $C=C_{\mu_{x}}$ independent of
  $\mu_{x}$ and all supports contained in some convex $U \subset
  \mathbb{R}^{n}$. If we assume that the supports of $\mu_{x}$ are
  contained in $D=\operatorname{cl}(U_{\delta})$ for some $\delta>0$
  then the optimal transport maps $S_{x}$ between $\mu_{x}$ and $\nu =
  m_{|[0,1]^{n}}$ vary uniformly, that is if $x_{n}\to x$ then
  $S_{x}:D\to U^{'}$ varies continuously in the uniform topology of
  $C^{0}(D,U^{'})$.
\end{lemma}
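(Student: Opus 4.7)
The plan is to combine the uniform H\"older regularity from \theoref{thm.loeper-reg} with an Arzel\`a--Ascoli argument, and then identify subsequential limits using stability of optimal transport together with Brenier-type uniqueness. Fix a sequence $x_n \to x$ in $K$. By hypothesis, \theoref{thm.loeper-reg} applies to every $\mu_{x_n}$ and to $\mu_x$ with the same constant $\mathcal{C}$, so the convex potentials $\phi_{x_n}$, normalised by $\phi_{x_n}(y_0) = 0$ at a fixed $y_0 \in U_\delta$, satisfy $\|\phi_{x_n}\|_{C^{1,\beta}(U_\delta)} \le \mathcal{C}$ uniformly in $n$. Since $D$ is compact and $U'$ is bounded, Arzel\`a--Ascoli extracts a subsequence along which $\phi_{x_{n_k}} \to \tilde\phi$ in $C^1(U_\delta)$, and hence $S_{x_{n_k}} = \nabla\phi_{x_{n_k}} \to \tilde S := \nabla\tilde\phi$ uniformly on $D$, with $\tilde\phi$ convex and $C^{1,\beta}$.

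To identify $\tilde S$ with $S_x$, continuity of the family yields $\mu_{x_{n_k}} \to \mu_x$ weakly, and combined with uniform convergence of $S_{x_{n_k}}$ this gives weak convergence of the joint couplings $(\mathrm{id} \times S_{x_{n_k}})_*\mu_{x_{n_k}} \to (\mathrm{id} \times \tilde S)_*\mu_x$. Since the second marginals on the left are all $\nu$, the limit is a transport plan between $\mu_x$ and $\nu$; continuity of $c(y,z) = -y \cdot z$ on the compact product $D \times \overline{U'}$ and stability of the optimal transport value under weak convergence of marginals then imply that this limit is optimal. By the uniqueness clauses of \propref{prop.sol-monge-prob} and \theoref{thm.monge.potential}, $\tilde S = S_x$ holds $\mu_x$-a.e., which continuity upgrades to pointwise equality on $\mathrm{supp}(\mu_x)$. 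The uniqueness (up to an additive constant) of the $C^{1,\beta}$ convex Brenier potential on the open convex set $U_\delta$, together with connectedness of $U_\delta$, then forces $\tilde\phi - \phi_x$ to be constant on $U_\delta$, so $\tilde S = S_x$ everywhere on $D$.

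Since every subsequence of $(S_{x_n})$ admits a further subsequence converging uniformly on $D$ to $S_x$, the full sequence $(S_{x_n})$ converges uniformly to $S_x$, which is exactly the claim. The main obstacle lies in the final identification step: Brenier-type uniqueness yields only $\mu_x$-a.e.\ agreement, and $\mathrm{supp}(\mu_x)$ need not exhaust $D$, so the key is to pass through the convex-potential formulation from \theoref{thm.monge.potential} and exploit Loeper's $C^{1,\beta}$ rigidity on the open convex set $U_\delta$ to promote $\mu_x$-a.e.\ agreement to uniform agreement on all of $D$.
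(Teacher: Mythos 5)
Your proposal follows essentially the same skeleton as the paper's proof: uniform $C^{1,\beta}$ bounds from \theoref{thm.loeper-reg}, normalisation $\phi_{x}(y_{0})=0$, Arzel\`a--Ascoli precompactness in $C^{1}$, identification of subsequential limits via optimality plus uniqueness, and the subsequence trick to get full convergence. Your stability step (weak convergence of the plans $(\operatorname{id}\times S_{x_{n_k}})_{*}\mu_{x_{n_k}}$ and optimality of the weak limit for the continuous cost on a compact product) is in fact a more careful rendering of what the paper compresses into the single assertion that the limit ``is also a convex potential solving the optimal transport problem.''

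There is, however, one step whose stated justification would fail. You correctly flag that \theoref{thm.monge.potential} yields only $\nabla\tilde\phi=\nabla\phi_{x}$ $\mu_{x}$-a.e., i.e.\ on $\operatorname{supp}\mu_{x}$, which need not exhaust $D$; but the ``Loeper $C^{1,\beta}$ rigidity on $U_{\delta}$'' you invoke to upgrade this to agreement on all of $D$ is not an existing principle. Interior H\"older estimates are not a unique-continuation statement for gradients of convex functions: the functions $\phi\equiv 0$ and $\tilde\phi(z)=\bigl(\max(|z|-r,0)\bigr)^{4}$ are both convex and $C^{2}$ on a ball, have identical gradients on the sub-ball $\{|z|\le r\}$, and hence push forward any measure supported there to the same image measure, yet they differ outside the sub-ball. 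So convexity plus $C^{1,\beta}$ regularity plus $\mu_{x}$-a.e.\ gradient agreement cannot, by themselves, force $\tilde\phi-\phi_{x}$ to be constant on $U_{\delta}$.

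The correct repair goes through the dual side, where the hypotheses actually provide full support: here $\nu=m_{|[0,1]^{n}}$ has density bounded below on the connected convex set $[0,1]^{n}$, so Brenier uniqueness applied to the transport from $\nu$ to $\mu_{x}$ determines the dual gradient $\nabla\psi_{x}$ Lebesgue-a.e.\ on $[0,1]^{n}$, and hence $\psi_{x}$ up to an additive constant on all of $\operatorname{supp}\nu$. Since $\phi_{x}$ and $\psi_{x}$ are Legendre transforms of each other (as the paper notes after \theoref{thm.monge.potential}), and conjugation over the fixed compact set $[0,1]^{n}$ is a sup-norm isometry --- so that the uniform limit $\tilde\phi$ of the canonical conjugates $\phi_{x_{n}}$ is itself a canonical conjugate --- this determines $\phi_{x}$, and therefore $\tilde\phi$, up to a constant on all of $D$; the normalisation at $y_{0}$ then kills the constant and gives $\tilde S=S_{x}$ everywhere on $D$. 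To be fair, the paper's own proof asserts ``up to a constant the potentials are unique'' with exactly the same tersity, so modulo replacing the rigidity appeal by this duality argument, your write-up is a faithful and somewhat more detailed version of the paper's argument.
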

\begin{proof}
  Because the $\|\cdot\|_{C^{1,\beta}}$-norm of $\{\phi_{x}\}_{x\in
    K}$ is uniformly bounded, the set is pre-compact in
  $C^{1}(D)$. Since up to a constant the potentials are unique we can
  assume $\phi_{x}(y_{0})=0$ for all $x\in K$ and some $y_{0}\in
  D$. The limit of
  $\lim_{n\to\infty}\phi_{x_{n}}\to\tilde{\phi}_{x_{0}}$ for some
  $x_{n}\to x_{0}$ is also a convex potential solving the optimal
  transport problem with $\tilde{\phi}_{x_{0}}(y_{0})=0$, which implies
  $\tilde{\phi}_{x_{0}}=\phi_{x_{0}}$. Therefore $\{\phi_{x}\}_{x\in
    K}$ is already closed and thus compact and $S_{x}:D\to U'$
  varies continuously in the $C^{0}$-topology as $x$ varies in $K$.
\end{proof}
\begin{proposition}
  \label{prop.cts-inv}Let $(\mu_{x})_{x\in
    K}\subset\mathcal{P}(\mathbb{R}^{n})$ be as above. In addition,
  assume each $\mu_{x}$ is supported on a convex set and has strictly
  positive Lebesgue density on its support, i.e. $\mu_{x}\ge
  \kappa \cdot m$ for some $\kappa >0$ independent of $x$, then \[
  S_{x|\operatorname{supp}\mu_{x}}:\operatorname{supp}\mu_{x}\to[0,1]^{n}\]
  is continuously invertible. Furthermore, if the supports vary
  continuously with respect to the Hausdorff metric on (compact)
  subsets of $\mathbb{R}^{n}$ then the inverse
  $T_{x}=S_{x}^{-1}:[0,1]^{n}\to\operatorname{supp}\mu_{x}\subset U$
  varies continuously in the $C^{0}([0,1]^{n}, U)$.  In addition,
  such a family $(\mu_{x})_{x\in X}$ can be represented by random
  continuous maps.

\end{proposition}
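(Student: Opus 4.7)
The plan is to derive the proposition from three ingredients: the regularity theory of optimal transport applied in the reverse direction, the same compactness argument used in the preceding lemma (now applied to the dual problem), and Proposition~\ref{prop.cont-repres}.

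For the continuous invertibility of $S_x$, I would apply Theorem~\ref{thm.monge.potential} and Theorem~\ref{thm.loeper-reg} to the optimal transport problem from $\nu := m|_{[0,1]^n}$ to $\mu_x$ with cost $c(x,y) = -x \cdot y$. Since $\nu$ has constant density on the convex set $[0,1]^n$ and $\mu_x \geq \kappa\, m$ on its convex support, the hypotheses of Theorem~\ref{thm.loeper-reg} are met with the roles of source and target exchanged, producing a H\"older continuous map $T_x = G_{\phi_x^*}$ such that $(T_x)_*\nu = \mu_x$, where $\phi_x^*$ is the Legendre dual of the potential $\phi_x$ associated with $S_x$. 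By Legendre duality $\nabla \phi_x^* \circ \nabla \phi_x = \mathrm{id}$ (and vice versa) wherever defined, so $T_x$ and $S_x|_{\operatorname{supp}\mu_x}$ are mutual inverses as continuous maps.

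For the continuity of $x \mapsto T_x$, I would mirror the compactness argument of the preceding lemma on the family $\{(\nu,\mu_x)\}_{x \in K}$. The Hausdorff continuity of $\operatorname{supp}\mu_x$ on compact $K$ confines all supports in a common bounded convex $U$ at a uniform positive distance from $\partial U$; combined with the $x$-independent bound $\mu_x \geq \kappa m$, this yields a uniform $C^{1,\beta}$ bound on the dual potentials $\phi_x^*$. After fixing a normalisation $\phi_x^*(y_0) = 0$ at some $y_0 \in [0,1]^n$, the family $\{\phi_x^*\}_{x \in K}$ is precompact in $C^1$. Any accumulation point along a sequence $x_n \to x_0$ is a convex potential satisfying $(\nabla \phi^*)_*\nu = \mu_{x_0}$ with the same normalisation and hence coincides with $\phi_{x_0}^*$ by uniqueness. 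Therefore $T_x = \nabla \phi_x^*$ varies continuously in $C^0([0,1]^n, U)$ as $x$ varies in $K$.

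The representation claim then follows by applying Proposition~\ref{prop.cont-repres} verbatim with $\nu = m|_{[0,1]^n}$ (a compactly supported probability measure) and the continuous family of continuous maps $T_x \colon [0,1]^n \to \mathbb{R}^n$ pushing $\nu$ to $\mu_x$. The main obstacle I anticipate is the uniformity step: one must verify that the compactly-contained hypothesis and the c-convexity conditions of Theorem~\ref{thm.loeper-reg} hold uniformly in $x$, so that the $C^{1,\beta}$ bound on $\phi_x^*$ is independent of $x$ and the Arzel\`a--Ascoli argument goes through. The two standing hypotheses of the proposition---the $x$-independent density bound $\mu_x \geq \kappa m$ on convex supports, and the Hausdorff continuity of these supports on compact $K$---are exactly what is needed to secure this uniformity.
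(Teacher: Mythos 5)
Your proposal is correct in substance, but for the key step --- continuity of $x \mapsto T_x$ in $C^0$ --- it takes a genuinely different route from the paper. The paper obtains $T_x = S_x^{-1}$ essentially as you do (via Corollary~\ref{cor.sol-monge-square} and Legendre duality of the two potentials), but it then avoids any second application of regularity theory: since $x \mapsto S_x$ is already known to be $C^0$-continuous from the preceding lemma, and the supports vary Hausdorff-continuously, the restricted graphs $\widetilde{gr}(S_x) \subset \operatorname{supp}\mu_x \times [0,1]^n$ vary continuously in the Hausdorff metric, and flipping the graphs gives Hausdorff convergence of $gr(T_x)$, hence uniform convergence of the continuous maps $T_x$ on the fixed compact domain $[0,1]^n$. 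You instead run Loeper's Theorem~\ref{thm.loeper-reg} a second time, in the reverse direction $\nu \to \mu_x$; this is legitimate, because the proposition's added hypotheses ($\mu_x \geq \kappa m$ on a convex support, with $\kappa$ independent of $x$) are precisely the target-side hypotheses of that theorem, while the fixed source $m_{|[0,1]^n}$ has bounded density (take $p=\infty$), is compactly contained in a large ball, and for the cost $c(x,y) = -x\cdot y$ the ``strictly convex with respect to each other'' condition reduces to choosing strictly convex ambient balls. Your route buys a quantitative uniform $C^{1,\beta}$ bound on the dual potentials, hence equicontinuity of $\{T_x\}_{x\in K}$, after which your Arzel\`a--Ascoli-plus-uniqueness argument (the same scheme as the preceding lemma) closes the proof; the paper's route is softer and shorter, but it uses Hausdorff continuity of the supports irreducibly (compare the remark following the proposition: without it only pointwise convergence is claimed), whereas in your argument the Hausdorff continuity enters only to keep the target supports in a compact class so that the constant $\mathcal{C}$ of Theorem~\ref{thm.loeper-reg} is uniform in $x$ --- a prudent precaution, since as quoted the dependence of $\mathcal{C}$ on the target set is not made explicit. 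Two small points to tighten, both at the same level of gloss as the paper's own proof: first, the Legendre inversion ``$\nabla\phi_x^* \circ \nabla\phi_x = \mathrm{id}$ wherever defined'' is not automatic for general convex potentials; the clean statement is that $(\operatorname{id}\times S_x)_*\mu_x$ and $(T_x \times \operatorname{id})_*\nu$ are both optimal plans for the pair $(\mu_x,\nu)$ and hence coincide by uniqueness, giving $T_x\circ S_x=\operatorname{id}$ $\mu_x$-a.e.\ and $S_x\circ T_x=\operatorname{id}$ $\nu$-a.e., upgraded to everywhere by continuity and the full-support assumptions. Second, in the identification step the equality $(\nabla\phi^*)_*\nu=\mu_{x_0}$ for the $C^1$-limit uses the weak continuity of $x\mapsto\mu_x$, a standing hypothesis worth citing explicitly. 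The final representation claim via Proposition~\ref{prop.cont-repres} is identical in both arguments.
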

\begin{remark}
  If we do not assume that the supports vary continuously then it is
  still possible to show that the maps $T_{x}$ converge pointwise on
  $[0,1]^{n}$ to $T_{y}$ as $x$ converges to $y$ in $K$.
\end{remark}
\begin{proof}
  By Lemma~\ref{lemma.cont-fam}, $S_{x}$ varies continuously in
  $C^{0}(D,\mathbb{R}^{n})$. Then Theorem~\ref{thm.monge.potential}
  implies that the optimal transport problem from $\nu$ to $\mu_{x}$
  has a unique continuous optimal transport map
  $T_{x}:[0,1]^{n}\to\operatorname{supp}\mu_{x}$ which, by
  Corollary~\ref{cor.sol-monge-square}, is the derivative of a
  differentiable convex potential $\psi_{x}$.  Because $\psi_{x}$ is
  (up to a constant) the Legendre transform of the potential
  $\phi_{x}$, whose derivative is $S_{x}$, we necessarily have
  $T_{x}=S_{x}^{-1}$ on $[0,1]^{n}$.

  To prove that $x\mapsto T_{x}$ is continuous it is sufficient to
  show that the graphs converge with respect to the Hausdorff
  metric. Because $x\mapsto S_{x}$ is continuous and the supports of
  $\mu_{x}$ vary continuously with respect to the Hausdorff metric,
  the restricted graph \[
  \widetilde{gr}(S_{x})=\{(y,S_{x}(y))\in\operatorname{supp}\mu_{x}\times[0,1]^{n}\}\]
  is continuous with respect to the Hausdorff metric on subsets of
  $\mathbb{R}^{n}\times\mathbb{R}^{n}$.  But this implies \[
  x \mapsto gr(T_{x})=(\widetilde{gr}(S_{x}))^{-1}\] is continuous and thus
  $x\mapsto T_{x}$ is continuous as well.

  In particular, Proposition~\ref{prop.cont-repres} implies that any
  such family $(\mu_{x})_{x\in K}$ can be represented by random
  continuous maps.

\end{proof}
\begin{proposition}
  Suppose that $\mu$ is absolutely continuous with respect to the
  Lebesgue measure $m$ with $L^p$-density for some $p>n$.  Furthermore,
  let $\nu=m_{|[0,1]^{n}}$. Then the transport map $S:\supp \mu \to
  [0,1]^{n}$ is continuously invertible on $U$, where $U$ is the set
  of point $z$ which admit a (convex) neighbourhood where the density
  of $\mu$ is strictly positive. In particular, this holds in
  $\{\rho>0\}=\operatorname{int}(\operatorname{supp}\mu)$ if the
  density $\rho$ of $\mu$ is continuous.
  \end{proposition}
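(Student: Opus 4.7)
The plan is to localise around each $z\in U$, apply the regularity theorem for the reverse optimal transport, and then conclude via Legendre duality. Fix $z\in U$ and choose a convex open neighbourhood $V\ni z$ on which $\rho=d\mu/dm$ is bounded below by some $\kappa>0$. Such a $V$ exists by hypothesis; in the ``in particular'' case, continuity of $\rho$ makes $\{\rho>0\}$ open and gives uniform positive lower bounds on relatively compact convex subneighbourhoods.

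By Corollary~\ref{cor.sol-monge-square} and Theorem~\ref{thm.monge.potential}, write $S=\nabla\phi$ for a convex potential $\phi$, and let $\psi=\phi^{*}$ denote its Legendre dual, so that the reverse optimal transport $T$ from $\nu$ to $\mu$ satisfies $T=\nabla\psi$. Apply Theorem~\ref{thm.loeper-reg} to the reverse problem, with the roles of ``$\mu$'' and ``$\nu$'' in that theorem played by our $\nu$ and $\mu$ respectively: the dimensional hypothesis on ``$\mu$'' is trivially satisfied since $\nu=m|_{[0,1]^{n}}$ has uniformly bounded density (so one may take $p=\infty$), while the lower-bound hypothesis on ``$\nu$'' holds on the convex set $V$ where $\mu\ge\kappa\cdot m$. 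The theorem then yields $\psi\in C^{1,\beta}$ on an interior subdomain of $[0,1]^{n}$ whose image under $\nabla\psi$ is contained in $V$; in particular $\psi$ is differentiable at every $y=S(x)$ with $x\in V$.

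Injectivity of $S$ on $V$ is then immediate: if $S(x_{1})=S(x_{2})=y$ with $x_{1},x_{2}\in V$, then by the subdifferential characterisation $x_{1},x_{2}\in\partial\phi^{*}(y)=\partial\psi(y)$, but the previous paragraph forces $\partial\psi(y)$ to be a singleton, whence $x_{1}=x_{2}$. The continuity of $S|_{V}^{-1}=\nabla\psi|_{S(V)}$ is inherited from the H\"older regularity of $\nabla\psi$. Covering $U$ by such neighbourhoods $V$ gives that $S|_{U}$ is a continuous injection with continuous inverse on $S(U)$, proving the claim; the ``in particular'' clause follows since continuity of $\rho$ guarantees that $\{\rho>0\}=\operatorname{int}(\operatorname{supp}\mu)$ is an open set on which each point has such a convex neighbourhood.

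The main obstacle will be verifying the hypotheses of Theorem~\ref{thm.loeper-reg} for the reverse transport, since the stated theorem requires the ambient domains to be strictly convex with respect to each other, whereas the source $[0,1]^{n}$ of the reverse transport is only convex and has corners. This can be handled by restricting to an arbitrary interior subdomain of $[0,1]^{n}$ (where the local regularity is what we actually need), by approximating the cube from inside by strictly convex sets and passing to the limit using Corollary~\ref{cor.sol-monge-stab}, or by appealing directly to Caffarelli's interior regularity result when available.
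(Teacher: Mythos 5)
There is a genuine gap at the central step, namely your application of Theorem~\ref{thm.loeper-reg} to the reverse transport. That theorem requires the \emph{target} measure to be a probability measure supported on the convex set $V'$ with density bounded below by $\kappa$ on all of $V'$. In your application the target is $\mu$ itself, and you only know $\mu\ge\kappa\cdot m$ on the small neighbourhood $V$; the measure $\mu$ in general carries mass outside $V$ and its density may vanish there, so the pair $(\nu,\mu)$ simply does not satisfy the hypotheses, no matter how the ambient domains are chosen. Your proposed workarounds (shrinking to an interior subdomain of $[0,1]^{n}$, approximating the cube by strictly convex sets via Corollary~\ref{cor.sol-monge-stab}, or invoking Caffarelli) all address the geometry of the \emph{source} domain, which is a secondary issue (one can always enlarge the ambient domains to balls); none of them repairs the failing lower-bound hypothesis on the target, and Caffarelli's interior regularity likewise needs two-sided density bounds on the relevant sets. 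There is also a circularity in the phrase ``an interior subdomain of $[0,1]^{n}$ whose image under $\nabla\psi$ is contained in $V$'': identifying such a subdomain presupposes control of $\nabla\psi$, which is what you are trying to prove; the correct set is $S(V)$, and a priori one knows nothing about its regularity.

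The paper closes exactly this gap with the restriction property of optimal plans \cite[Theorem 4.6]{Vil09}: letting $\pi=(\operatorname{id}\times T)_{*}\nu$ be the optimal plan between $\nu$ and $\mu$, the restriction $\pi_{|\mathbb{R}^{n}\times V_{x}}$ is again an optimal plan between its marginals $\tilde{\nu}\le\nu$ and $\tilde{\mu}=\mu_{|V_{x}}$, and it is still induced by the same map $T$. Now $\tilde{\mu}$ \emph{is} supported on the convex set $V_{x}$ with density bounded below, and $\tilde{\nu}\le m_{|[0,1]^{n}}$ satisfies the dimensional condition, so Loeper's theorem applies to the restricted problem and yields that $T$ is (H\"older) continuous, hence single-valued, on $\operatorname{supp}\tilde{\nu}=S(V_{x})$. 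The paper then upgrades this local statement using the convexity of $S^{-1}(S(x))$, which follows from the Legendre-duality description of the contact set: single-valuedness of $T$ near $S(x)$ forces $S^{-1}(S(x))\cap V_{x}=\{x\}$, and convexity of $S^{-1}(S(x))$ then excludes any further preimage point elsewhere. Your singleton-subdifferential argument would indeed give global injectivity \emph{if} the differentiability of $\psi$ at $y=S(x)$ were established, so the duality part of your proof is sound; what is missing is precisely the localisation-by-restriction step that makes the regularity theorem applicable.
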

\begin{proof}
By Theorem~\ref{thm.loeper-reg} the map $S$ is
    (H\"{o}lder) continuous.  Furthermore, the optimal transport map
    $T$ from $\nu$ to $\mu$ is almost everywhere the inverse of $S$ as
    they are the gradients of convex potentials $\phi$ and $\psi$,
    which are the Legendre transforms of each other. Thus, it suffices
    to show that $T$ is single-valued on $S(U)$. By definition we
  have
\begin{equation*}
S(x)=\{y\,|\,\phi(x)+\psi(y)=x\cdot y\}
\end{equation*}
and
\begin{equation*}
  \psi(y)=\sup\{xy-\phi(x)\},
\end{equation*}
which implies that $S^{-1}(S(x))$ is convex. So if we show that $T$ is
one-to-one on $S(V_{x})$ for some small neighbourhood $V_x$ for all
$x\in U$, then it implies that $S^{-1}(S(x))\cap V_{x}=x$, i.e. $S$
and $T$ are both single-valued and thus continuous on resp. $U$ and
$S(U)$.

Let $\pi$ be the optimal transport plan between $\nu$ and $\mu$, in
particular we have \[ (\operatorname{id}\times T)_{*}\nu=\pi.\]

Let $x$ be a point in $U$ and $V_{x}$ be a closed neighbourhood 
such that $\mu \ge \kappa m$ from some $\kappa > 0$.
By the restriction property for optimal
transport plans {\cite[Theorem 4.6]{Vil09}} the plan
$\tilde{\pi}_{|\mathbb{R}^{n}\times V_{x}}$ is an optimal transport
plan between its marginal, i.e. between some $\tilde{\nu}\le\nu$ and
$\tilde{\mu}=\mu_{|V_{x}}$. Obviously this plan is also induced by
$T$, i.e.\[ (\operatorname{id}\times T_{*})\tilde{\nu}=\tilde{\pi}.\] 
Also note that $\tilde{\nu}$ is supported on $S(V_{x})$.

The measures $\tilde{\nu}$ and $\tilde{\mu}$ satisfy the assumptions
of Theorem~\ref{thm.loeper-reg}, which implies that $T$ is
(H\"{o}lder) continuous on $S(V_x)$ and hence one-to-one.
\end{proof}
\begin{corollary}
\label{cor.cts-strictconv}
Let $\mu$ and $\nu$ be as above. Assume
the support of $\mu$ is strictly convex and \[
U=\{\rho>0\}=\operatorname{int}(\operatorname{supp}\mu)\]
where $\rho$ is the continuous density of $\mu$. Then the optimal
transport map $T$ from $\nu$ to $\mu$ is a continuous map from
$\operatorname{supp}\nu$ to $\operatorname{supp}\mu$.
\end{corollary}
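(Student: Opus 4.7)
The plan is to upgrade the continuity of $T$ on $S(U)$ established in the previous proposition to continuity on all of $\operatorname{supp}\nu = [0,1]^{n}$, by showing that the Brenier potential $\psi$ whose gradient represents $T$ has a single-valued subdifferential at every point of $[0,1]^{n}$. Strict convexity of $\operatorname{supp}\mu$ is the decisive input. By \corref{cor.sol-monge-square}, $T(y) = \nabla\psi(y)$ holds $\nu$-a.e.\ for a convex lower semicontinuous $\psi$ on $\mathbb{R}^{n}$, which is the Legendre dual of the convex potential $\phi$ with $\nabla\phi = S$ where defined. The previous proposition tells us that $\phi$ is differentiable on $U = \{\rho>0\}$ with $\nabla\phi = S$ single-valued, so $S\colon U\to S(U)$ is a homeomorphism and in particular injective.

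First I would verify that $\partial\psi(y_{0})\subseteq\operatorname{supp}\mu$ for every $y_{0}\in[0,1]^{n}$. Since $\operatorname{supp}\mu$ is compact, $\psi = \phi^{*}$ is finite and convex on all of $\mathbb{R}^{n}$, hence its subdifferential is a non-empty compact convex set at every point. Any $z\in\partial\psi(y_{0})$ is a limit of gradients $\nabla\psi(y_{n}) = T(y_{n})$ along a sequence $y_{n}\to y_{0}$ through the full-measure set where $\psi$ is differentiable; since $T(y_{n})\in\operatorname{supp}\mu$ for $\nu$-a.e.\ $y_{n}$ and $\operatorname{supp}\mu$ is closed, the inclusion follows.

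The central step is to show that $\partial\psi(y_{0})$ is a singleton for every $y_{0}\in[0,1]^{n}$. Suppose for contradiction that there exist distinct $a,b\in\partial\psi(y_{0})$. By convexity of the subdifferential, the closed segment $[a,b]\subseteq\partial\psi(y_{0})\subseteq\operatorname{supp}\mu$; by strict convexity of $\operatorname{supp}\mu$, the open segment $(a,b)$ then lies inside $U$. Legendre duality gives $c\in\partial\psi(y_{0}) \iff y_{0}\in\partial\phi(c)$, so for every $c\in(a,b)\subseteq U$ we have $y_{0}\in\partial\phi(c) = \{S(c)\}$. This forces $S$ to be constant equal to $y_{0}$ on the non-degenerate segment $(a,b)$, contradicting injectivity of $S$ on $U$.

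Having obtained single-valuedness, the map $T(y_{0})\in\partial\psi(y_{0})$ is well-defined on $[0,1]^{n}$; since a single-valued selection of an upper semicontinuous, compact-convex-valued map is continuous, $T$ is continuous on $[0,1]^{n}$ with values in $\operatorname{supp}\mu$. I expect the main obstacle to be the third step: careful bookkeeping to guarantee that the open segment $(a,b)$ genuinely lies in $U$ rather than merely in $\operatorname{supp}\mu$, because this is precisely what lets us invoke the previous proposition's single-valued differentiability of $\phi$ along $(a,b)$ and contradict injectivity. A secondary technicality is handling the boundary points $y_{0}\in\partial[0,1]^{n}$ within the same argument, which is granted by the finiteness of $\psi$ on all of $\mathbb{R}^{n}$ coming from compactness of $\operatorname{supp}\mu$.
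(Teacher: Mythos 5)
Your proof is correct, and it is essentially the dual rendition of the paper's argument, finished with a different lemma. The paper works on the $S$-side: it restricts $S$ to $\operatorname{supp}\mu$, recalls from the preceding proposition that $S$ is injective on the interior and that each fibre $S^{-1}(S(x))$ is convex, and uses strict convexity of $\operatorname{supp}\mu$ to kill fibres through boundary points (a nondegenerate segment in a fibre would have to lie in $\partial\operatorname{supp}\mu$, which strict convexity forbids); it then concludes by inverting a continuous bijection on the compact set $\operatorname{supp}\mu$, using $S(\operatorname{supp}\mu)=\operatorname{supp}\nu$. You work on the $\psi$-side: you show $\partial\psi(y_{0})$ is a singleton for every $y_{0}\in[0,1]^{n}$ by exactly the same geometric mechanism (a segment $[a,b]\subseteq\partial\psi(y_{0})\subseteq\operatorname{supp}\mu$ has, by strict convexity, its open part in $U$, where $\partial\phi(c)=\{S(c)\}$ forces $S\equiv y_{0}$, contradicting interior injectivity of $S$), and you close with upper semicontinuity of the compact-convex-valued subdifferential rather than the compact-bijection argument. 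Your version buys two things the paper glosses over: $T$ comes out defined and continuous at \emph{every} point of $[0,1]^{n}$, boundary included, because $\partial\psi$ is nonempty there, and you never need the unproved identity $S(\operatorname{supp}\mu)=\operatorname{supp}\nu$. One small repair in your first step: it is not true that every $z\in\partial\psi(y_{0})$ is a limit of gradients $\nabla\psi(y_{n})$; for a finite convex function the subdifferential is the closed \emph{convex hull} of such limits. Since those limits lie in the closed set $\operatorname{supp}\mu$, which is convex by hypothesis, the inclusion $\partial\psi(y_{0})\subseteq\operatorname{supp}\mu$ still follows; alternatively, take $\psi(y)=\sup_{x\in\operatorname{supp}\mu}\left(x\cdot y-\phi(x)\right)$, a supremum of affine functions with slopes in $\operatorname{supp}\mu$, which gives the inclusion immediately.
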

\begin{proof}
  Restrict $S$ to $\operatorname{supp}\mu$. By the previous theorem
  $S$ is injective in the interior of its domain. Furthermore,
  $S^{-1}(S(x))$ is convex. Because the support of $\mu$ is strictly
  convex this also implies $S$ is injective on the boundary, i.e. if
  $x'\in S^{-1}(S(x))$ for $x\in\partial\operatorname{supp}\mu$ then
  $\lambda x'+(1-\lambda)x\in\partial\operatorname{supp}\mu$ which
  implies $x=x'$.

  Because $S$ is one-to-one on its (convex) domain
  $\operatorname{supp}\mu$ and
  $S(\operatorname{supp}\mu)=\operatorname{supp}\nu$, so is its
  inverse $T$ on $\operatorname{supp}\nu$. Thus, $T$ is continuous.
\end{proof}

The two previous results show that one can only control the behaviour
of the transport maps if the supports are strictly convex or the
support convex and the density positive everywhere.

\begin{proposition}
Let $K$ be some (compact) set and $B_{1}$ be the closed unit ball
in $\mathbb{R}^{n}$ and $(f_{x}:B_{1}\to f_{x}(B_{1})\subset\mathbb{R}^{n})_{x\in K}$
be a family of diffeomorphisms (onto their images) varying
continuously in $C^1$ 
w.r.t. $x$. Assume $(\mu_{x})_{x\in K}$ is a continuously varying
family of measures supported on the images of $f_{x}$, i.e.\[
\operatorname{supp}\mu_{x}=f_{x}(B_{1}).\]
If each $\mu_{x}$ has $L^{\infty}$ (resp. continuous)
density w.r.t. the Lebesgue measure then there is a (unique) continuously varying family $(\nu_{x})_{x\in K}$
of measures with $L^{\infty}$ (resp. continuous)
densities with $(f_x)_\ast \nu_x=\mu_x$.

Furthermore, if every point in the interior of the support of $\mu_{x}$
admits a neighbourhood such that the Lebesgue density of $\mu_{x}$
is strictly positive in the interior of the support then the same
holds for $\nu_{x}$. 
\end{proposition}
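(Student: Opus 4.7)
The natural plan is to exploit the fact that each $f_{x}$ is a diffeomorphism and simply pull back: set
\[
\nu_{x} := (f_{x}^{-1})_{\ast}\mu_{x}.
\]
Uniqueness is then automatic, since $(f_{x})_{\ast}\nu = \mu_{x}$ forces $\nu = (f_{x}^{-1})_{\ast}(f_{x})_{\ast}\nu = \nu_{x}$. The change-of-variables formula for $C^{1}$ diffeomorphisms gives Lebesgue regularity with density
\[
\sigma_{x}(y) = \rho_{x}(f_{x}(y))\cdot |\det Df_{x}(y)|,
\]
where $\rho_{x}$ is the density of $\mu_{x}$. Since $f_{x}$ is a $C^{1}$ diffeomorphism on the compact ball $B_{1}$, $|\det Df_{x}(y)|$ is continuous and both bounded and bounded away from zero uniformly in $y\in B_{1}$. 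Hence $\sigma_{x}\in L^{\infty}$ whenever $\rho_{x}\in L^{\infty}$, and $\sigma_{x}$ is continuous whenever $\rho_{x}$ is continuous (as a composition and product of continuous functions).

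For the continuous variation of the family $(\nu_{x})_{x\in K}$ in the weak${}^{\ast}$ topology, the plan is to use the test-function identity
\[
\int g\,d\nu_{x} = \int g\circ f_{x}^{-1}\,d\mu_{x}\fa g\in C(B_{1}).
\]
Given $x_{n}\to x_{0}$, since $f_{x}\to f_{x_{0}}$ in $C^{1}$ on $B_{1}$ and the derivatives are uniformly nondegenerate, a standard application of the inverse function theorem yields a common neighbourhood $V$ of $f_{x_{0}}(B_{1})$ on which $f_{x_{n}}^{-1}$ is defined for large $n$, contains $f_{x_{n}}(B_{1})$, and $f_{x_{n}}^{-1}\to f_{x_{0}}^{-1}$ uniformly on $V$. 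Then $g\circ f_{x_{n}}^{-1}\to g\circ f_{x_{0}}^{-1}$ uniformly on $V$, and splitting
\[
\int_{V}g\circ f_{x_{n}}^{-1}\,d\mu_{x_{n}} - \int_{V}g\circ f_{x_{0}}^{-1}\,d\mu_{x_{0}}
\]
into the contribution from the uniform convergence of integrands (controlled by $\|g\circ f_{x_{n}}^{-1}-g\circ f_{x_{0}}^{-1}\|_{\infty}$) and the contribution from weak${}^{\ast}$ convergence of $\mu_{x_{n}}$ against the fixed continuous bounded function $g\circ f_{x_{0}}^{-1}$ (extended by Tietze to a neighbourhood of $V$ if needed), both terms go to zero. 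This gives weak${}^{\ast}$ continuity of $x\mapsto\nu_{x}$.

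Finally, for the positivity statement, fix $y\in B_{1}^{\circ}$ and set $z=f_{x}(y)$, which lies in the interior of $\operatorname{supp}\mu_{x} = f_{x}(B_{1})$ because $f_{x}$ is a homeomorphism preserving interiors. By hypothesis there is a neighbourhood $W$ of $z$ on which the density of $\mu_{x}$ is strictly positive in $(\operatorname{supp}\mu_{x})^{\circ}$; pulling back by the continuous $f_{x}^{-1}$ gives a neighbourhood $f_{x}^{-1}(W)$ of $y$ on which $\rho_{x}\circ f_{x}>0$, and combined with $|\det Df_{x}|>0$ this yields $\sigma_{x}>0$ on this neighbourhood. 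The main subtlety I expect is the weak${}^{\ast}$ continuity step, since the supports $f_{x}(B_{1})$ move with $x$ and so the test functions $g\circ f_{x}^{-1}$ live on varying domains; the trick is to pass everything to a common neighbourhood using the $C^{1}$ convergence and uniform invertibility of $Df_{x}$.
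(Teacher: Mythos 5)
Your proof is correct and takes essentially the same route as the paper, which defines $\nu_{x}$ by the same Jacobian identity $\vartheta_{x}(z)=\rho_{x}(f_{x}(z))\cdot J_{f_{x}}(z)$ and derives all conclusions from the joint continuity of $(x,z)\mapsto J_{f_{x}}(z)$ on $K\times B_{1}$; your write-up merely makes explicit the weak* continuity step that the paper leaves implicit. The one imprecision is your claim that $f_{x_{n}}^{-1}$ is defined on a common neighbourhood $V$ of $f_{x_{0}}(B_{1})$ --- it is defined only on $f_{x_{n}}(B_{1})$, which need not contain such a $V$ (take $f_{x_{n}}=(1-1/n)\operatorname{id}$) --- but your Tietze-extension fallback, i.e.\ extending $g\circ f_{x_{0}}^{-1}$ continuously and using the uniform estimate $\sup_{y\in B_{1}}\bigl|g(y)-g(\tilde{f}_{x_{0}}^{-1}(f_{x_{n}}(y)))\bigr|\to 0$, already repairs this.
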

\begin{proof}
Assume $d\mu_{x}=\rho_{x}d m$ and there is some (Lebesgue regular)
family $d\nu_{x}=\vartheta_{x}d m$ such that $(f_{x})_{*}\nu_{x}=\mu_{x}$.
By the Jacobian equations we have \[
\vartheta_{x}(z)=\rho_{x}(f_{x}(z))\cdot J_{f_{x}}(z).\]
So defining $\vartheta_{x}(z)$ as above gives us $\nu_{x}$. 

Because $(f_{x})_{x\in K}$ are continuously varying diffeomorphisms
the Jacobians $J_{f_{x}}(z)$ vary continuously with respect to
$(x,z)\in K\times B_{1}$ which implies the statement of the
proposition.
\end{proof}
\begin{corollary}
\label{cor.regularity-cont-repres}
Let $(\mu_{x})_{x\in K}$ be as above. In addition, assume that each
$\mu_x$ has continuous density which is strictly positive on the 
interior of its support. Then there exists a family of continuous maps $T_{x}:[0,1]^{n}\to\mathbb{R}^{n}$ varying in
the $C^{0}$-topology and the following holds\[
(T_{x})_{*} m_{|[0,1]^{n}}=\mu_{x}.\]

In particular, all such families $(\mu_{x})_{x\in K}$ can be represented
by random continuous maps $(f_{\omega})_{\omega\in\Omega}$.
\end{corollary}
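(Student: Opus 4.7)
The plan is to chain together the proposition immediately above (which moves measures between manifolds by the diffeomorphisms $f_x$) with \propref{prop.cts-inv} (which builds continuously varying transport maps from the cube to a convex, strictly positive family) and then invoke \propref{prop.cont-repres} to produce the random continuous maps.

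First I would pull the family $(\mu_x)_{x \in K}$ back to the unit ball $B_1$. By the previous proposition there is a continuously varying family $(\nu_x)_{x \in K}$ of Lebesgue regular probability measures on $B_1$ with $(f_x)_\ast \nu_x = \mu_x$, whose densities are strictly positive in the interior whenever those of $\mu_x$ are. In particular each $\nu_x$ is supported on the (strictly convex) closed ball $B_1$ and is bounded below by a positive constant on its support (uniformly in $x$ after shrinking to a compact set where the densities stay positive; this uses the continuity of the density in $(x,z)$ supplied by the previous proposition together with compactness of $K$).

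Next, \propref{prop.cts-inv} applies to the family $(\nu_x)_{x \in K}$: the supports $B_1$ are fixed (hence trivially continuous in the Hausdorff metric), each $\nu_x$ is absolutely continuous with positive density bounded below, and the family varies continuously. It follows that the optimal transport maps $S_x : [0,1]^n \to B_1$ with $(S_x)_\ast m_{|[0,1]^n} = \nu_x$ exist, are continuous, and vary continuously in the $C^0$-topology as $x$ runs through $K$. Composing, I set
\[
T_x := f_x \circ S_x : [0,1]^n \longrightarrow \mathbb{R}^n.
\]
Each $T_x$ is continuous, and since $f_x$ varies continuously in $C^1$ (in particular in $C^0$) on $B_1$ and $S_x$ varies continuously in $C^0$ with uniformly bounded image contained in $B_1$, the composition $T_x$ varies continuously in the $C^0$-topology. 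The push-forward identity is immediate:
\[
(T_x)_\ast m_{|[0,1]^n} = (f_x)_\ast (S_x)_\ast m_{|[0,1]^n} = (f_x)_\ast \nu_x = \mu_x.
\]

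Finally, applying \propref{prop.cont-repres} with $\nu = m_{|[0,1]^n}$ (which has compact support) and the continuously varying family $T_x$ yields random continuous maps $(f_\omega)_{\omega \in \Omega}$, with $(\Omega,\tau) = ([0,1]^n, m_{|[0,1]^n})$, representing $(\mu_x)_{x \in K}$. The main potential obstacle is the verification that the hypotheses of \propref{prop.cts-inv} hold uniformly in $x$, in particular that the lower bound $\nu_x \ge \kappa\, m$ on the support can be chosen independently of $x$; but this is ensured by the joint continuity of the density $\vartheta_x(z)$ in $(x,z) \in K \times B_1$, combined with compactness, as provided by the previous proposition.
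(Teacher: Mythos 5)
Your proposal is correct and follows exactly the chain the paper intends: its proof is a one-line citation of the previous proposition, Proposition~\ref{prop.cts-inv} (with Corollary~\ref{cor.cts-strictconv} covering continuity up to the boundary of the strictly convex ball $B_1$), and Proposition~\ref{prop.cont-repres}, which you have simply spelled out, including the composition $T_x = f_x \circ S_x$ and the push-forward identity. Your attention to the uniformity of the lower density bound $\nu_x \ge \kappa\, m$ via joint continuity and compactness of $K$ is a point the paper leaves implicit, so your write-up is if anything more careful than the original.
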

\begin{proof}
  Just note the previous proposition implies that the family $(\nu_x)_{x\in K}$
  and the measure $\nu=m_{|[0,1]^n}$ satisfies the assumptions of 
  Corollary~\ref{cor.cts-strictconv}. Therefore, we can apply 
  Proposition~\ref{prop.cts-inv} to get continuous maps 
  $\tilde{T}_x:[0,1]^n\to \mathbb{R}^n$ varying
  continuously in the $C^0$-topology such that $(\tilde{T}_{x})_{*} \nu=\nu_{x}.$
  Now it is easy to see that the maps $T_x = f_x \circ \tilde{T}_x$ satisfy 
  the required assumptions.
  Similarly one gets a random continuous map $(g_\omega)_{\omega\in\Omega}$ 
  representing $(\nu_v)_{x\in K}$. Then the random continuous map 
  $(f_x \circ g_\omega)_{\omega\in\Omega}$ is representing $(\mu_x)_{x\in K}$.
\end{proof}
\begin{remark}
The corollary can be applied if the supports are star-shaped with
differentiably varying centre and radial function, i.e. there are
$z_{x}\in B_{x}=\operatorname{supp}\mu_{x}$ and differentiable maps
\[
r_{x}:\mathbb{S}^{n-1}\to(0,\infty)\]
such that $x\mapsto(z_{x},r_{x})$ is continuous from $K$ to $\mathbb{R}^{n}\times C^{1}(\mathbb{S}^{n-1},(0,\infty))$
(continuity of $x\mapsto z_{x}$ is enough to show that $J_{f_{x}}(z)$
is continuous in $x$). 

The diffeomorphisms $f_{x}:B_{1}\to\mathbb{R}^{n}$ are constructed
via \[
f_{x}((\alpha,\rho))=z_{x}+(\alpha,r_{x}(\alpha)\cdot\rho).\]
(For simplicity we mixed polar coordinates $(\alpha,\rho)$ with Cartesian
$z_{x}$).\end{remark}

%%%%%%%%%%%%%%%%%%%%%%%%%%%%%%%%%%%%%%%%%%%%%%%%%%%%%%%%%%%%%%%%%%%%%%%%%%%%%%%%%%%%%%

\section{Measures on bundles}
\label{sec.lifting}

In the previous section we addressed the regularity of random maps
depending on the properties of the family of measures. The
constructions were almost entirely carried out on $\mathbb{R}^{n}$.
In the next section, we shall tackle the problem of constructing
random diffeomorphisms on Riemannian manifolds. In order to use the
results on regularity, we shall lift the measures on the manifolds 
first to measures on their tangent bundles and 
then to measures on a trivial vector bundle containing the tangent
bundle. The idea is to use the local equivalence via the exponential
map of a neighbourhood of a point $p\in N$ and a neighbourhood of $0$
of the tangent space at that point~\cite{Jos11} 
and that via Nash's embedding theorem the tangent bundle is contained
in a trivial bundle. In this section we assume $M$ to be a complete separable 
metric space, and $N$ a locally compact Riemannian manifold. For $p \in N$, 
we shall locally endow $T_{p}N$ with the structure of a probability space via the
\textit{exponential map} $\operatorname{exp}_{p} : T_{p}N \to N$. We
say that $Q \subset T_{p}N$ is measurable if
$\operatorname{exp}_{p}(Q) \subset N$ is measurable.

Given $\mu_{x}\in\mathcal{P}(N)$, a measure $\tilde{\mu}_{x} \in
\mathcal{P}(T_{f(x)}N)$ can be defined as 
\begin{displaymath}
  \tilde{\mu}_{x} = (\operatorname{exp}^{-1}_{f(x)})_{*}\mu_{x},
\end{displaymath}
where $f: M \to N$ is some continuous map, and $f(x)$ belongs to the
support of $\mu_{x}$. For example, it might be its centre of mass. In
the case of randomly perturbed dynamics, in general $f$ is given by
the unperturbed system if one considers (bounded) random
perturbations.
Thus, for $x \in M$, the mapping
\begin{displaymath}
x\mapsto\mu_{x}\in\mathcal{P}(N),
\end{displaymath}
implicitly defines a mapping
\begin{displaymath}
x\mapsto \tilde{\mu}_{x}\in\mathcal{P}(T_{f(x)}N).
\end{displaymath}
Since the exponential map at a point $p\in N$ is a local
diffeomorphism between a neighbourhood of that point and a
neighbourhood of $0$ in the tangent space at $p$, we obtain the
following lemmata, whose proofs we leave to the reader.
\begin{lemma}
\label{lem.prob.bund}
Let $M$ be a complete separable metric space, $N$ a Riemannian
manifold. Consider a continuous family of probabilities
$(\mu_{x})_{x\in M}$ on $N$. Suppose that there is a $C^{r}$-map
$f:M\to N$, for $r \geq 0$, such that for each $x$, the support of
$\mu_{x}$ is contained in a sufficiently small neighbourhood
$U_{f(x)}$ of $f(x)$.

Then $(\mu_{x})_{x\in M}$ lifts to a continuous family of probability
measures $(\tilde{\mu}_{x})_{x\in M}$ on $TN$ with $\tilde{\mu}_{x}$
supported on $T_{f(x)}N$ (considered as a subset of $TN$).
\end{lemma}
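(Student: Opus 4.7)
The plan is to define the lifted measure pointwise via the inverse exponential map and then check weak* continuity. For each $x\in M$ the hypothesis that $\operatorname{supp}\mu_x$ lies in a sufficiently small neighbourhood $U_{f(x)}$ of $f(x)$ means we may take $U_{f(x)}$ inside the injectivity radius, so that $\exp_{f(x)}:V_{f(x)}\subset T_{f(x)}N\to U_{f(x)}$ is a diffeomorphism. I would then set
\[
\tilde\mu_x := (\exp_{f(x)}^{-1})_*\mu_x,
\]
viewed as an element of $\mathcal{P}(TN)$ supported on the fibre $T_{f(x)}N\subset TN$. Since $f$ is at least $C^0$ and the exponential map varies smoothly in the base point, this prescription is unambiguous.

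For the continuity of $x\mapsto\tilde\mu_x$, suppose $x_n\to x$ in $M$. Then $f(x_n)\to f(x)$ in $N$, and for $n$ large the neighbourhoods $U_{f(x_n)}$ and $U_{f(x)}$ can be arranged to lie in a common chart on which the map
\[
\Phi:(p,q)\longmapsto \exp_p^{-1}(q)\in TN
\]
is continuous (in fact smooth). Given a test function $\varphi\in C_b(TN)$, consider $\psi_n(q):=\varphi(\exp_{f(x_n)}^{-1}(q))$ and $\psi(q):=\varphi(\exp_{f(x)}^{-1}(q))$, defined on a common open set containing the supports of $\mu_{x_n}$ and $\mu_x$ eventually. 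By continuity of $\Phi$ and $\varphi$, $\psi_n\to\psi$ uniformly on this common neighbourhood (shrinking if necessary, and using that supports stay in a compact set). Combining uniform convergence of the integrands with weak* convergence $\mu_{x_n}\to\mu_x$, I obtain
\[
\int_{TN}\varphi\,d\tilde\mu_{x_n}=\int_N \psi_n\,d\mu_{x_n}\longrightarrow\int_N\psi\,d\mu_x=\int_{TN}\varphi\,d\tilde\mu_x,
\]
which is weak* convergence in $\mathcal{P}(TN)$.

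The main (mild) obstacle is bookkeeping with the supports: a priori $\psi_n$ is only defined in a neighbourhood of $f(x_n)$, so to legitimately split the limit into the two standard pieces (uniform convergence of test functions times weak convergence of measures) I need to know that all supports lie eventually in a fixed relatively compact open set, which follows because $f(x_n)\to f(x)$ and the neighbourhoods containing $\operatorname{supp}\mu_{x_n}$ can be chosen uniformly small. A convenient way to make this rigorous is to cut off $\varphi$ by a bump function supported in a coordinate neighbourhood of $T_{f(x)}N$ inside $TN$, reducing to an ordinary weak convergence statement on a fixed Euclidean chart, where continuous dependence of $\exp^{-1}$ in the base point is standard.
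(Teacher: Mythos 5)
Your proposal is correct and follows exactly the paper's intended argument: the paper defines $\tilde\mu_x=(\exp_{f(x)}^{-1})_*\mu_x$ in Section~6 and justifies the lemma solely by the remark that the exponential map is a local diffeomorphism near $f(x)$. Your write-up simply makes explicit the weak* continuity verification (joint continuity of $(p,q)\mapsto\exp_p^{-1}(q)$, uniform convergence of test functions on a common compact neighbourhood of the supports, plus weak* convergence of $\mu_{x_n}$) that the paper leaves implicit.
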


\begin{lemma}
\label{lem.bundle.dens}
  If, in addition to the assumptions above, $\mu_{x}$ is absolutely
  continuous with respect to a volume form on $N$, then
  $\tilde{\mu}_{x}$ is absolutely continuous with respect to the
  Lebesgue measure on $T_{f(x)}N$ (and by equivalence to the standard
  Lebesgue measure on $\mathbb{R}^{n}$).

  Also, if $N$ is compact then the 
  the Lebesgue densities of $\mu_{x}$ and $\tilde{\mu}_{x}$ are
  comparable in the sense that they have the  same growth conditions, Lipschitz- or
  H\"older-constants or positivity properties of the density in the interior of
  their support. And, in particular, (strict) convexity of the support
  of $\mu_{x}$ is preserved if the support is contained in a ball around
  $f(x)$ with radius less than the convexity radius of $N$.
\end{lemma}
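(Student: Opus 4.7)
The plan is to set up $\tilde{\mu}_x$ as the pushforward under the inverse exponential map and read off every density and support property from the Jacobian of $\exp_{f(x)}$. By \lemmaref{lem.prob.bund}, once $U_{f(x)}$ is taken inside the injectivity neighbourhood of $f(x)$, the map $\exp_{f(x)}^{-1}\colon U_{f(x)}\to T_{f(x)}N$ is a smooth diffeomorphism onto its image and $\tilde{\mu}_x=(\exp_{f(x)}^{-1})_{\ast}\mu_x$. Writing $d\mu_x=\rho_x\,d\operatorname{vol}_N$ on $U_{f(x)}$, the change-of-variables formula yields
\[
d\tilde{\mu}_x(v)=\rho_x\bigl(\exp_{f(x)}(v)\bigr)\,J_{\exp_{f(x)}}(v)\,dv,
\]
where $dv$ is the Lebesgue measure on $T_{f(x)}N$ induced by the Riemannian inner product. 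Absolute continuity of $\tilde{\mu}_x$ with respect to $dv$ (and hence to Lebesgue measure on $\mathbb{R}^{n}$ after trivialising $T_{f(x)}N$ by an orthonormal frame) is then immediate.

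For the comparability of densities I would exploit compactness of $N$ to control the exponential map uniformly in the base point. The map $(p,v)\mapsto\exp_p(v)$ is $C^{\infty}$ on an open neighbourhood of the zero section of $TN$, so compactness of $N$ furnishes some $r_{0}>0$, smaller than the global injectivity radius, such that on $\{|v|<r_{0}\}$ the Jacobian $J_{\exp_p}(v)$ together with all its derivatives in $(p,v)$ up to any fixed order is uniformly bounded above and below away from zero. Consequently both $\exp_{p}$ and multiplication by $J_{\exp_{p}}$ are bi-Lipschitz (and $C^{k,\alpha}$-bi-bounded) with constants depending only on $N$. Plugging this into the formula above, $\rho_x$ and the density of $\tilde{\mu}_{x}$ share the same growth behaviour, Lipschitz and H\"older moduli, and positivity on the interior of their supports, up to multiplicative constants depending only on $N$.

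For the convexity statement I would restrict attention further to a geodesic ball of radius less than the convexity radius of $N$. There $\exp_{f(x)}$ is a diffeomorphism between the Euclidean ball $B_{r}(0)\subset T_{f(x)}N$ and the geodesically convex ball $B_{r}(f(x))\subset N$, and the unique minimising geodesic joining any two points of $B_{r}(f(x))$ stays inside the ball; Jacobi field comparison inside the convexity radius then allows one to transfer (strict) geodesic convexity of a support $\operatorname{supp}\mu_x\subset B_{r}(f(x))$ to (strict) convexity of $\exp_{f(x)}^{-1}(\operatorname{supp}\mu_x)\subset B_{r}(0)$ in the normal-coordinate sense. The main obstacle is precisely this final step: geodesic convexity on $N$ does not in general coincide with Euclidean convexity of the preimage under $\exp_{f(x)}$, so one needs the convexity-radius hypothesis to guarantee that the radial distortion of the exponential map is mild enough to preserve (strict) convexity of the class of supports relevant for the optimal-transport regularity results of \secref{sec.reg.dens}.
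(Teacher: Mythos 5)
Your first two paragraphs are correct and follow exactly the route the paper intends: the paper in fact states Lemma~\ref{lem.prob.bund} and Lemma~\ref{lem.bundle.dens} without proof, as immediate consequences of the remark that the exponential map is a local diffeomorphism, so your pushforward formula $d\tilde{\mu}_x(v)=\rho_x\bigl(\exp_{f(x)}(v)\bigr)J_{\exp_{f(x)}}(v)\,dv$ and the compactness argument giving uniform two-sided bounds on $J_{\exp_p}$ and its derivatives on $\{|v|<r_0\}$ supply precisely the detail the paper leaves implicit. (Your honest weakening of ``same Lipschitz- or H\"older-constants'' to ``comparable up to multiplicative constants depending only on $N$'' is the right reading of the statement.)

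The convexity step, however, is a genuine gap, and you half-admit this yourself: you first assert that ``Jacobi field comparison inside the convexity radius allows one to transfer'' convexity, and then state that exactly this step is the main obstacle. As written the assertion would fail for plain (non-strict) convexity: in normal coordinates at $f(x)$, geodesics not passing through $f(x)$ are genuinely curved, so a geodesically convex set with a totally geodesic boundary piece (on $S^2$, intersect a small geodesic ball centred at $q$ with one side of a great circle through $q$, and take $f(x)=p\neq q$ nearby) has a preimage under $\exp_{f(x)}^{-1}$ that is not convex in the linear structure of $T_{f(x)}N$, no matter how far below the convexity radius one stays; the convexity-radius hypothesis alone does not make the radial distortion ``mild enough.'' What does survive is \emph{strict} convexity under a quantitative smallness assumption: since $g_{ij}=\delta_{ij}+O(|v|^{2})$ in normal coordinates, the Euclidean principal curvatures of the lifted boundary differ from the geodesic ones by $O(r)$ with constants controlled by the curvature bounds of the compact manifold $N$, so a uniform positive lower bound $\kappa_{0}$ on the boundary curvature of $\operatorname{supp}\mu_{x}$ persists once the supporting ball has radius small compared with $\kappa_{0}$. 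To close your proof you should therefore invoke the paper's standing hypothesis that the support lies in a \emph{sufficiently small} neighbourhood $U_{f(x)}$ (read quantitatively, relative to a uniform lower curvature bound on the boundaries of the supports), rather than the convexity radius; this is also the form of convexity actually needed to feed Loeper's regularity theory in Section~\ref{sec.reg.dens}.
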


If the tangent bundle is parallelisable, i.e.
\begin{displaymath}
TN\cong N\times\mathbb{R}^{n},
\end{displaymath}
the mapping $x\mapsto\tilde{\mu}_{x}$ can be considered as a pair of
maps
\begin{displaymath}
x\mapsto(f(x),\tilde{\mu}_{x})\in N\times\mathbb{R}^{n}.
\end{displaymath}

More generally, if there is a trivial bundle $F\cong
N\times\mathbb{R}^{k}$ with a local equivalence $\rho : F \to N$ of a
neighbourhood of $0$ of $F_{f(x)}$ to the manifold (or the tangent
space at $T_{f(x)}N$), then we can lift the measures via the local
equivalence, by defining
\begin{displaymath}
\hat{\mu}_{x} = \rho_{*}\mu_{x}.
\end{displaymath}
Thus, for $x \in M$ we implicitly have the mapping
\begin{displaymath}
x\mapsto\hat{\mu}_{x}\in\mathcal{P}(F_{f(x)}),
\end{displaymath}
and therefore the family $(\hat{\mu}_{x})_{x\in M}$, which satisfies
the assumptions that the original family $(\mu_{x})_{x\in M}$ does.

In fact, one can always construct via isometric
  embedings a trivial bundle with a natural projection, and show that
the measures on the manifolds can be lifted to such a bundle. As it is
well know, Nash embedding theorem implies that the
tangent bundle $TN$ of a $n$-dimensional manifold $N$ is a sub-bundle
of the trivial bundle $F = N \times \mathbb{R}^{k}$ for some $k \ge n$
with a natural projection
\begin{displaymath}
\pi:F\to TN,
\end{displaymath}
which is a linear projection from $\mathbb{R}^{k}$ to $\mathbb{R}^{n}$
at each fibber at $p\in N$~\cite{Jos11}.

Suppose we have a fixed measure $\nu$ on $\mathbb{R}^{k}$ which has a
smooth density with respect to the Lebesgue measure on
$\mathbb{R}^{k}$, such that for any linear projection
$r:\mathbb{R}^{k}\to\mathbb{R}^{n}$ the measure \[
\nu^{(r)}=r_{*}\nu\] is absolutely continuous with respect to the
Lebesgue measure on $\mathbb{R}^{n}$ with positive density inside the
interior of its support. Then we claim that we can lift any Lebesgue
regular measure $\tilde{\mu}$ on $\mathbb{R}^{n}$ with support in the
interior of the support of $r_{*}\nu$ to a Lebesgue regular measure
$\hat{\mu}$ on $\mathbb{R}^{k}$. To show this, notice that
$\tilde{\mu}$ is absolutely continuous with respect to $\nu^{(r)}$,
i.e.
\begin{displaymath}
d\tilde{\mu}(x)=g(x)d\nu^{(r)}(x)
\end{displaymath}
then defining
\begin{displaymath}
d\hat{\mu}(y)=g(r(y))d\nu
\end{displaymath}
gives the required measure $\hat{\mu}$. Obviously, if the support of
$\tilde{\mu}$ is convex so is the support of $\hat{\mu}$, as well as
positivity in the interior is preserved by the lifts.

More generally, if there is a trivial bundle $F\cong N\times\mathbb{R}^{k}$
with a local equivalence $\rho:F\to N$ of a neighbourhood of $0$
of $F_{f(x)}$ to the manifold (or the tangent bundle at $T_{f(x)}N$)
such that we can lift the measures to
\begin{displaymath}
x\mapsto\hat{\mu}_{x}\in\mathcal{P}(F_{f(x)})
\end{displaymath}
with $(\hat{\mu}_{x})_{x\in M}$ satisfying the assumptions of the
previous section and
\begin{displaymath}
\rho_{*}\hat{\mu}_{x}=\mu_{x}
\end{displaymath}
then there is a family of random maps representing
$(\mu_{x})_{x\in M}$.

%%%%%%%%%%%%%%%%%%%%%%%%%%%%%%%%%%%%%%%%%%%%%%%%%%%%%%%%%%%%%%%%%%%%%%%%%%

\section{Representation by random maps with higher regularity}

In this section, we want to address the representation of Markov
chains by random maps with higher regularity. In particular, our
ultimate goal is to give conditions for the representation by random
diffeomorphisms. Our first theorem in this section provides conditions
for the representation by continuous random maps which we shall use
thereafter to investigate the formal conditions for the representation
by random diffeomorphisms.

Before proceeding with, we present an example of the construction of
random diffeomorphisms for the case when $M = N$ is parallelisable,
based on~\cite[Example 1]{Ara00}.
\begin{example}
\label{ex.rand_maps_torus}
\emph{ Let $M$ be any parallelisable $n$-dimensional
  $C^{k}$-Riemannian manifold, for $k \geq 1$. Consider $f : M \to M$,
  a $C^{r}$-diffeomorphism for $1 \leq r \leq k$. We want to construct
  a family of random $C^{r}$-diffeomorphisms close to $f$ in the
  $C^{r}$-topology. Since $M$ is parallelisable, we have that $TM
  \cong M \times \R^{n}$, therefore there exists a globally
  orthonormal basis for the tangent space, that is, $n$ globally
  orthonormal vector fields $X_{1}(x) = (1, 0, \ldots, 0), \ldots,
  X_{n}(x) = (0, 0, \ldots, 1)$ in $\mathfrak{X}^{r}(M)$ for all $x
  \in M$. Consider the probability space $\Omega = [0,1]^{n}$. Then,
  for each $x \in M$, a family of random $C^{r}$-diffeomorphisms can
  be constructed as $f : \Omega \times M \to M$, where for $\omega =
  (\omega^{1}, \ldots, \omega^{n}) \in \Omega$ we have
\begin{displaymath}
%\begin{split}
  (\omega, x) \mapsto f(\omega, x) = f_{\omega}(x) := \exp_{f_{0}(x)}(\varepsilon\omega^{1}X_{1}(f_{0}(x)) + \cdots +
  \varepsilon\omega^{n}X_{n}(f_{0}(x))),
\end{displaymath}
where we set $f_{0}(x)
:= f(x)$ for every $x \in M$.
Notice that since $\omega$ is given by a certain distribution, not
necessarily the uniform distribution on $[0,1]^{n}$, it induces a
probability on the maps $f_{\omega}$. Taking $\varepsilon \to 0$
implies $||f_{\omega} - f_{0}||_{C^{r}} \to 0$.  Recall that the space
$\textrm{Diff}^{r}(M)$ of diffeomorphisms is open in $C^{r}(M)$, for
$r \geq 1$~\cite{Hir76}. Thus, the perturbation we describe induces a
probability on $\textrm{Diff}^{r}(M)$, giving the random
diffeomorphisms around $f_{0}$ with the same distribution of $\omega$
on the $C^{r}$-topology.}
\end{example}
\noindent The situation is more complicated however for more general manifolds
and perturbations. We start with the following result.

\begin{maintheorem}
\label{thm.main.repr.diff}
Let $M$ and $N$ be compact Riemannian $C^{k}$-manifolds without
boundary, with $k \geq 1$. Let $m$ be the normalised volume measure on
$N$. Consider $(\mu_{x})_{x\in M}$, a continuous family of probability
measures on $N$, such that each $\mu_{x}$ is absolutely continuous
with respect to $m$, with positive
% \textcolor{red}{H\"{o}lder}-
continuous $L^{\infty}$-density $\rho_{x}$, and strictly convex
support. Suppose that there is a $C^{r}$-diffeomorphism $f:M\to N$,
for $r \leq k$, such that for each $x$, the support of $\mu_{x}$ is
contained in a sufficiently small neighbourhood $U_{f(x)}$ of
$f(x)$. Then $(\mu_{x})_{x\in M}$ can be represented by a family
$(f_{\omega})_{\omega\in\Omega}$ of $C^{r}$-random continuous maps.
\end{maintheorem}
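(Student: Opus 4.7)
My plan is to combine the lifting construction of \secref{sec.lifting} with the continuous-transport results of \secref{sec.reg.dens} to produce, for each $\omega$, a map $f_\omega$ which is $C^r$-close to $f$, and then to conclude via the openness of diffeomorphisms in the $C^1$-topology. First I would lift $(\mu_x)_{x\in M}$ to the tangent bundle: by the smallness of each support, $\exp_{f(x)}^{-1}$ is a diffeomorphism on $U_{f(x)}$, so \lemmaref{lem.prob.bund} produces a continuous family $(\tilde{\mu}_x)_{x\in M}$ with $\tilde{\mu}_x$ supported on $T_{f(x)}N$, and \lemmaref{lem.bundle.dens} ensures that absolute continuity, positivity of density, and convexity of supports all pass to the lifted family.

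Next I would realise $TN$ as a subbundle of the trivial bundle $F=N\times\mathbb{R}^k$ via a Nash embedding, with linear fibrewise projection $\pi:F\to TN$. Fix on $\mathbb{R}^k$ a reference measure $\nu$ with smooth, strictly positive density, and, as in the construction at the end of \secref{sec.lifting}, lift $\tilde{\mu}_x$ to $\hat{\mu}_x$ on $F_{f(x)}\cong\mathbb{R}^k$ by
\[
d\hat{\mu}_x(y)=g_x(\pi(y))\,d\nu(y),
\]
where $g_x$ is the density of $\tilde{\mu}_x$ relative to the fibrewise projection of $\nu$. The resulting family inherits convex support, positivity, and continuous dependence on $x$, and satisfies $\pi_*\hat{\mu}_x=\tilde{\mu}_x$. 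Then \propref{prop.cts-inv} applied fibrewise yields a family of continuous transport maps $T_x:\operatorname{supp}\nu\to F_{f(x)}$ with $(T_x)_*\nu=\hat{\mu}_x$, varying continuously in the $C^0$-topology. Setting $(\Omega,\tau)=(\operatorname{supp}\nu,\nu)$, choosing a random variable $X$ with $\operatorname{law}(X)=\nu$, and defining
\[
f_\omega(x)=\exp_{f(x)}\bigl(\pi(T_x(X(\omega)))\bigr),
\]
the push-forward argument of \propref{prop.cont-repres} shows that the law of $\omega\mapsto f_\omega(x)$ is $\mu_x$.

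It remains to upgrade to $C^r$-diffeomorphisms. Since $f$ is a $C^r$-diffeomorphism, $\exp$ and $\pi$ are smooth, and the $C^r$-structure of $M$, $N$, and $f$ propagates through the lifting construction, I would argue via \theoref{thm.loeper-reg} parametrised in $x$ that $x\mapsto T_x$ is not merely $C^0$-continuous but in fact $C^r$, with uniform $C^1$-bounds controlled by the diameter of $U_{f(x)}$. The smallness of $U_{f(x)}$ then forces $f_\omega$ to be uniformly $C^1$-close to $f$, so by openness of diffeomorphisms in the strong $C^1$-topology on compact manifolds each $f_\omega$ is a $C^r$-diffeomorphism.

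The main obstacle is this last step: promoting the $C^0$-continuous dependence $x\mapsto T_x$ proved in \secref{sec.reg.dens} to genuine $C^1$-stability in $x$. This requires a parametrised Monge-Amp\`ere regularity theory and, in particular, stability estimates for the Brenier potentials $\phi_x$ with respect to variation of the marginals, uniform on the compact parameter space $M$. Once those analytic ingredients are in place, the remaining steps are essentially bookkeeping: the fibrewise push-forward identity, composition with the smooth $\exp$ and $\pi$, and an application of \propref{prop.cont-repres} glue the pieces together.
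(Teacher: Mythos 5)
Your proposal follows essentially the same route as the paper's own argument: lifting $(\mu_x)$ to the tangent bundle via the exponential map (\lemmaref{lem.prob.bund} and \lemmaref{lem.bundle.dens}), embedding $TN$ into a trivial bundle $N\times\mathbb{R}^k$ with fibrewise lift of the measures as at the end of \secref{sec.lifting}, applying the transport results of \secref{sec.reg.dens} to select sections continuously, and concluding by openness of $\operatorname{Diff}^r(M,N)$ in $C^r(M,N)$. The analytic gap you honestly flag --- upgrading the $C^0$-dependence of $x\mapsto T_x$ to genuine $C^1$-closeness of $f_\omega$ to $f$ via parametrised regularity estimates --- is likewise left implicit in the paper's proof, which simply asserts that maps $C^r$-close to $f$ can be chosen, so your account matches the paper's argument in both structure and level of detail.
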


The proof of this theorem is based on regularity theory, the result on
the conditions of representation of measures by continuous maps, and
the lifting properties of measures that we have presented in the
previous section. Indeed, as discussed in Section~\ref{sec.lifting},
it is possible to lift the measures on $N$ to its tangent bundle via
the exponential map and we can identify $TN$ with a sub-bundle of a
trivial bundle $N \times \mathbb{R}^{l}$~\cite{Jos11}. Then
Lemma~\ref{lem.prob.bund}, and Lemma~\ref{lem.bundle.dens} provide a
natural way of constructing a continuous family of probabilities on
the bundles according to $(\mu_{x})_{x\in M}$ on $N$. Furthermore, the
construction of a trivial bundle shows a natural way of embedding,
thus lifting again this measures to some $\mathbb{R}^{k}$, such that
the results of Section~\ref{sec.reg.dens} can be applied. In other
words, we can continuously select sections of the bundles. Thus, we
choose maps $C^{r}$-close to $f$ according to $\mu_{x}$.

\begin{proof}[Proof of Theorem \ref{thm.main.repr.diff}]
  We divide the proof into two cases. Namely, when the manifold $N$
  has trivial bundle, and when it has not.

\noindent\textit{Case I: Parallelisable manifolds:} Let us begin
with the case when $N$ has a trivial bundle. We start with the
following.

\noindent\textbf{Step 1}: \textit{There exists a continuous family $(\tilde{\mu}_{x})_{x \in M}$ on $TN$ and
  open neighbourhoods $V_{f(x)} \subset T_{f(x)}N$, such that every
  $\supp \tilde{\mu}_{x} \subset V_{f(x)}$.}  Indeed, since by
hypothesis the support of each $\mu_{x}$ is contained in some small
open $U_{f(x)}$, from Lemma~\ref{lem.prob.bund}, the family
$(\mu_{x})_{x \in M}$ is lifted to a continuous family
$(\tilde{\mu}_{x})_{x \in M}$ on $TN$. Furthermore, since the
probabilities are lifted via the exponential map, each
$\tilde{\mu}_{x}$ is supported in small neighbourhoods $V_{f(x)}
\subset T_{f(x)}N \subset TN$.

\noindent\textbf{Step 2}: \textit{Each $\tilde{\mu}_{x}$ has strictly convex support and is absolutely
  continuous with respect to the volume measure on $TN$, with
  densities $\gamma_{x}$ as regular as $\rho_{x}$.} Indeed,
Lemma~\ref{lem.bundle.dens} gives us the regularity conditions and
assures us the strictly positive density on the interior of the
support of each $\tilde{\mu}_{x}$. Furthermore, it shows that the
support of each $\tilde{\mu}_{x}$ is strictly convex.

\noindent\textbf{Step 3}: \textit{The family $(\tilde{\mu}_{x})_{x \in M}$ can be 
  represented by random continuous maps.} Note that $\tilde{\mu}_{x} =
\gamma_{x}Vol$, and that the density $\gamma_{x}$ can be written as
$\gamma_{x} = \exp_{f(x)}^{-1} \circ \rho_{x}$. Since $\rho_{x}$ is
$L^{\infty}(m)$, and the exponential map is locally a
$C^{\infty}$-diffeomorphism, we have that over bounded domains each
measure on the bundle also has bounded densities, i.e., $\gamma_{x}$
is $L^{\infty}(Vol)$. Furthermore, parallelisability of $TN$ implies that
$TN \cong N\times \mathbb{R}^n$. In particular, we have 
$\supp \tilde{\mu}_x \supset {f(x)}\times \mathbb{R}^n$. 
Therefore, we can assume that each $\tilde{\mu}_x$ lives on the same
$\mathbb{R}^n$. Now the continuous family of probability
that we have just constructed fulfils the conditions of
Corollary~\ref{cor.regularity-cont-repres}. Thus, they can be
represented by a random continuous map $(\tilde{f}_\omega:M\to TN)_{\omega\in\Omega}$.

\noindent\textbf{Step 4}: \textit{The
  family $(\mu_{x})_{x \in M}$ by random continuous maps.} 
  Using the exponential map $\exp:TN\to N$, set 
  $f_\omega := \exp \circ \tilde{f}_\omega$. 
  By construction of the measure family $(\tilde{\mu}_x)_{x\in M}$ one
  can easily verify that $(f_\omega)_{\omega\in\Omega}$ represents 
  $(\mu_x)_{x\in M}$.

\vspace{1cm}

\noindent\textit{Case II: General manifolds}
In this case note that $TN$ is contained in a trivial vector bundle 
$N\times\mathbb{R}^k$ such that the natural embedding 
$e:TN\to N\times\mathbb{R}^k$ is smooth.

\noindent\textbf{Step 1'} \textit{There exists a continuous family 
  $(\tilde{\mu}_{x})_{x \in M}$ on $N\times\mathbb{R}^k$ and
  open neighbourhoods $V_{f(x)} \subset f(x)\times\mathbb{R}^k$, such that every
  $\supp \tilde{\mu}_{x} \subset V_{f(x)}$.}
  Just note that as above one can first lift the measures $\mu_x$ to a family 
  $\hat{\mu}_x$ living on $TN$. By assumption, each measure has density w.r.t. 
  the Lebesgue measure on $T_{f(x)}N$. 
  As the embedding $e$ is smooth, if take the Lebesgue measure 
  on $\mathbb{R}^k \equiv {f(x)}\times\mathbb{R}^k$, restrict it 
  to a sufficiently small ball  $\lambda^k_{|B_\epsilon(0)}$, then the push-forward 
  has density w.r.t. the Lebesgue measure on $T_{f(x)}N$ which is smooth in 
  the interior of a small ball $B_\delta \subset T_{f(x)}N$.
  By duality, we can pull-back the densities of the measures $\hat{\mu}_x$ to 
  get measures $\tilde{\mu}_x$ with Lebesgue density.
  \noindent\textbf{Steps 2' and 3'} \textit{As steps 2 and 3 above.}

\noindent\textbf{Step 4'} \textit{The
  family $(\mu_{x})_{x \in M}$ by random continuous maps.}  As we
obtained a random continuous map $(\tilde{f}_\omega : M \to
N\times\mathbb{R}^k)$, we only need to define $f_\omega := e\circ \exp
\circ \tilde{f}_\omega$ to obtain the required family.

\end{proof}

\subsection{Conditions for a representation by random diffeomorphisms}

The proof of Theorem \ref{thm.main.repr.diff} showed that a crucial
step was the construction of random continuous maps $(f_\omega : M \to
\mathbb{R}^n)$. As the (sufficiently regular) solutions of the optimal
transport problem solve an elliptic partial differential equation,
namely the Monge-Amp\`ere equation, whose boundary conditions are
given in terms of densities, we can get regularity conditions from
general principles of elliptic regularity theory. Formally, this works
as follows.  Let $F:\Lambda \times C(N)\to \mathbb{R}$ be the solution
operator of an elliptic equation on $N$ depending on the parameter
$x\in \Lambda$, for some parameter space $\Lambda$. In our case,
$\Lambda$ will stand for boundary values, and in fact, we have
$\Lambda =M$. We then have
$$
0 = \frac{d}{d x} F = \frac{\partial F}{\partial x}  
                         + \frac{\partial F}{\partial \varphi} \cdot \frac{d\varphi}{dx}.
$$
In the elliptic case, for each $x$, the solution $\varphi$ is unique
and satisfies a-priori estimates, that is, it is controlled by the
data of the equation $F$. This means that we can control
$\frac{\partial \varphi}{\partial F}$. Therefore, the derivative
$\frac{\partial F}{\partial \varphi}$ is invertible, and we obtain
$$
\frac{d\varphi}{dx} = \left(\frac{\partial F}{\partial \varphi}\right)^{-1} 
                                     \frac{\partial F}{\partial x}.
$$
A similar formal calculation works for higher order derivatives. 

In our situation, the elliptic equation is the Monge-Amp\`ere
equation, i.e. $F(x,\varphi_x)=0$, if and only if, $\varphi_x$ solves
the Monge-Amp\`ere equation under the (second boundary) condition
$(\nabla \varphi_x)_* \mu_0 = \mu_x$, for which the regularity theory
is developed in \cite{MTW05,TW09}.

Thus, for our purposes, we only need to explicitly verify the
dependence of $F$ on $x$ and the ellipticity of the boundary value
problem, that is, a-priori estimates for a solution $\varphi$ of the
Monge-Amp\`ere equation under the boundary condition $(\nabla
\varphi)_* \mu_0 = \mu$. These have been obtained in \cite[Theorem
1.1]{TW09}. More precisely, for the case of a quadratic cost function
as considered here, that result yields a $C^2$-estimate for $\varphi$
in terms of the geometry of $N$, $\mu_0, \mu$ and $\sup |\varphi|$,
and this estimate implies uniqueness, see \cite[Theorem
1.2]{TW09}. From this, one may obtain $C^{2,\alpha}$-estimates, and
linear elliptic regularity theory then yields higher order estimates
in a standard manner.  Then the above formula yields the dependence of
the solution $\varphi_x$ on the parameter $x$. That is, a smooth
dependence of $\mu_x$ on $x\in M$ will translate into a corresponding
smooth dependence of $\varphi_x$ on $x$. In particular, the transport
maps will vary smoothly. In addition, we have the following proposition.
\begin{proposition}
\label{prop.ampere} 
  Assume that ellipticity conditions as discussed above are satisfied so that the solution of the Monge-Amp\`ere equation varies
  smoothly w.r.t. the parameter $x\in M$. If $f : M \to N$ is a
  diffeomorphism as in Theorem~\ref{thm.main.repr.diff} and the
  perturbation is sufficiently small, then the constructed random
  continuous map from Theorem~\ref{thm.main.repr.diff} is in fact a
  random diffeomorphism.
\end{proposition}

\begin{proof}
 Just note that the random map is constructed via an exchange of parameter, i.e.
 $$
 f_\omega (x) = T_x(\omega),
 $$
 where $\omega \in \Omega=B_1(0)\subset \mathbb{R}^n$.  The condition
 of smooth dependence on $x\in M$ is equivalent to saying that each
 $f_\omega$ is smooth (uniformly) dependent on $x \in M$. Therefore,
 if the perturbation is sufficiently small, then the maps converge on
 the $C^{r}$-norm to the unperturbed map $f$.  As the set of
 diffeomorphisms $\operatorname{Diff}(M,N)$ is open in $C^{r}(M,N)$,
 we have that $f_{\omega}$ must be a diffeomorphism as well.
\end{proof}

Therefore, Proposition~\ref{prop.ampere} and the discussion right
before it tell us that if $\mu_x$ fulfils the conditions in
Theorem~\ref{thm.main.repr.diff} and depends smoothly on $x\in M$, it
can be represented by random diffeomorphisms.

\section*{Acknowledgements}
The research leading to these results
has received funding from the European Research Council under the
European Union's Seventh Framework Programme (FP7/2007-2013) / ERC
grant agreement n$^\circ$~267087. M.K. was supported by the International Max Planck
Research School ``Mathematics in the Sciences''. We would like to thank the anonymous
referee for the careful reading and constructive comments, which have contributed to
substantially improve the presentation of this manuscript.
 
%\bibliographystyle{plain}
%\bibliography{refs}

%%%%%%%%%%%%%%%%%%%%%%%%%%%%%%%%%%%%%%%%%%%%%
%\vspace{1cm}

%%%%%%%%%%%%%%%%%%%%%%%%%%%%%%%%%%%%%%%%%%%%%

\bibliographystyle{amsalpha}

\end{document}